\def\blu{\textcolor{blue}}
\def\R{\mathbb{R}}
\def\Z{\mathbb{Z}}
\def\i{\mathrm i}
\def\d{\mathrm d}
\def\e{\mathrm e}
\def\E{\mathbb E}
\def\P{\mathbb P}
\def\text{\mbox}
\def\1{{\bf 1}}
\newcommand{\noi}{\noindent}
\newcommand {\nn}{\nonumber}
\newtheorem{theorem}{Theorem}
\newtheorem{corollary}[theorem]{Corollary}
\newtheorem{proposition}[theorem]{Proposition}
\theoremstyle{definition}
\newtheorem{remark}{Remark}
\begin{document}

\title{Estimating long memory in panel
 random-coefficient AR(1) data}
\author{Remigijus Leipus$^{1}$, \ Anne Philippe$^{2}$, \ Vytaut\.e Pilipauskait\.e$^{3}$, \  Donatas Surgailis$^1$  }
\date{\today \\ \small
\vskip.2cm
$^1$Vilnius University, Faculty of Mathematics and Informatics, Institute of Applied Mathematics, Lithuania\\
$^2$Universit\'{e} de Nantes, Laboratoire de Math\'{e}matiques Jean
Leray, France \\
$^3$Aarhus University, Department of Mathematics, Denmark.  \\
}

\maketitle

\begin{abstract}
	
We construct an asymptotically normal estimator $\widetilde \beta_N$ for the tail index $\beta$ of a distribution on $(0,1)$ regularly varying at $x=1$,
when its $N$ independent realizations are not directly observable. 
The estimator $\widetilde \beta_N$  is a version of the tail index estimator of Goldie and Smith (1987) based on suitably truncated observations contaminated with arbitrarily dependent `noise' which vanishes as $N$ increases.
We apply $\widetilde{\beta}_N$ to panel data comprising
$N$  random-coefficient AR(1) series, each of length $T$, for estimation of the tail index of the random coefficient at the unit root, in which case the unobservable random coefficients are replaced
by sample lag 1 autocorrelations of individual time series.
Using asymptotic normality of $\widetilde \beta_N$, we construct a statistical procedure to test if the panel random-coefficient AR(1) data exhibit long memory.
A simulation study illustrates finite-sample performance of the introduced inference procedures.

\end{abstract}

{\bf Keywords:} random-coefficient autoregression; tail index estimator; measurement error;   panel data; long memory process.
\medskip

{\bf 2010 MSC:} 62G32, 62M10.

\section{Introduction}

Dynamic panels (or longitudinal data) comprising observations taken at regular time intervals for the same individuals such as households, firms, etc.\ in a large heterogeneous population, are often described by time series models with random parameters (for reviews on dynamic panel data analysis, see \cite{arel2003}, \cite{balt2015}).
One of the simplest models for individual evolution is the random-coefficient AR(1) (RCAR(1)) process
\begin{equation} \label{RCAR1}
X_i(t) = a_i X_i (t-1) + \zeta_i (t), \quad t \in \Z, \quad i=1, 2, \dots,
\end{equation}
where the innovations $\zeta_i(t)$, $t \in \Z$, are independent identically distributed (i.i.d.) random variables (r.v.s) with $\E \zeta_i (t) = 0$, $\E \zeta^2_i (t) < \infty$ and the autoregressive coefficient $a_i \in (0,1)$ is a r.v.,
independent of $\{\zeta_i (t), \, t \in \Z\}$. It is assumed that the random coefficients $a_i$, $i=1,2, \dots$, are i.i.d., while the innovation sequences $\{\zeta_i(t),t\in \Z\}$ can be either independent or dependent across $i$, by inclusion of a common `shock'
to each unit; see Assumptions  (A1)--(A4) below.
If the distribution of $a_i$ is sufficiently `dense' near unity,
then statistical properties of the individual evolution in \eqref{RCAR1}  and the corresponding panel
can differ greatly from those in the case of fixed $a \in (0,1)$. To be more specific, assume that the AR coefficient
$a_i$ has a density function $g (x)$, $x \in (0,1)$, satisfying
\begin{equation}\label{betacond}
g(x) \sim g_1 (1-x)^{\beta - 1}, \quad x \to 1-,
\end{equation}
for some $\beta >1$ and $g_1 > 0$. Then a stationary solution of RCAR(1) equation \eqref{RCAR1} has the following autocovariance function
\begin{equation}\label{covX}
\E X_i (0) X_i(t) =  \E \zeta^2_i (0) \E \frac{a^{|t|}_i}{1-a^2_i} \sim \frac{g_1}{2}
 \Gamma(\beta-1) \E \zeta^2_i (0)  t^{-(\beta-1)}, \quad
t \to \infty,
\end{equation}
and exhibits long memory in the sense that
$\sum_{t\in \Z} |\operatorname{Cov} (X_i(0), X_i(t))| = \infty$  for $\beta \in (1,2]$.
The same long memory property applies to the contemporaneous aggregate
\begin{equation} \label{barXN}
\bar X_N(t) := N^{-1/2} \sum_{i=1}^N X_i(t), \quad t \in \Z,
\end{equation}
of  $N$ independent individual evolutions in 
\eqref{RCAR1} and its Gaussian
limit arising as $N \to \infty$.
For the beta distributed squared AR coefficient $a_i^2$, these facts were first uncovered by \cite{gran1980} and later
extended to more general distributions and/or RCAR equations in \cite{gonc1988}, \cite{zaff2004}, \cite{celo2007}, \cite{OV2004}, \cite{ps2010}, \cite{PPS2014} and other works, see \cite{lei2014} for review. Assumption \eqref{betacond} and the parameter $\beta $
play a crucial role for statistical  (dependence) properties of
the panel $\{ X_i (t), \, t = 1, \dots, T, \, i = 1, \dots, N \}$ as $N $ and $T $ increase, possibly at different  rates.
Particularly, \cite{pil2014} proved that for $\beta \in (1, 2)$
the distribution of the normalized sample mean
$\sum_{i=1}^N \sum_{t=1}^T X_i(t)$ is asymptotically normal if $N / T^\beta \to \infty$ and $\beta$-stable if $N / T^\beta \to 0$ (in the
`intermediate' case  $N / T^\beta \to c \in (0,\infty)$ this limit distribution is more complicated and given by
an integral with respect to a certain Poisson random measure). In the case of common innovations ($\{\zeta_i(t), \ t \in \Z\} \equiv \{\zeta(t), \ t \in \Z\} $) the limit stationary aggregated process exists
under a different normalization ($N^{-1} $ instead of $N^{-1/2}$ in \eqref{barXN})
and is written as a moving-average in the above innovations
with deterministic coefficients $\E a_1^j$, $j \ge 0$, which decay as $\Gamma (\beta) j^{-\beta} $ with $j \to \infty $
and exhibit long memory for $ \beta \in (1/2, 1)$;
see \cite{zaff2004}, \cite{ps2009}.
The trichotomy of the limit distribution of the sample mean for a panel comprising RCAR(1) series driven by common innovations is discussed in \cite{pil2015}.

In the above context, a natural
statistical problem concerns inference about
the distribution of the random AR coefficient $a_i$, e.g., its cumulative distribution function (c.d.f.) $G$  or the parameter $\beta $ in \eqref{betacond}.
\cite{lei2006}, \cite{celo2010} estimated the density $g$ using sample autocovariances of the limit aggregated process.
For estimating parameters of $G$, \cite{bib:ROB78} used the method of moments. He proved asymptotic normality of the estimators for moments of $G$ based on the panel RCAR(1) data as $N \to \infty$
for fixed $T$, under the condition  $\E (1-a^2_i)^{-2} < \infty$ which does not allow for long memory in $\{ X_i(t), \, t \in \Z \}$.
For parameters of the beta distribution,
\cite{ber2010} discussed maximum likelihood estimation based
on (truncated) sample lag 1 autocorrelations computed from $\{X_i(1), \dots, X_i(T) \}$, $i=1, \dots, N$,
and proved consistency and asymptotic normality of the introduced estimator as $N, T \to \infty$. In nonparametric context,
\cite{lei2016} 
studied the  empirical c.d.f.\ of $a_i$ based on 
sample lag 1 autocorrelations similarly to  \cite{ber2010},
and derived  its asymptotic properties as $N, T \to \infty$, including those of
a kernel density estimator.  
Moreover, \cite{lei2016} proposed another estimator of moments of $G$ 
and proved its asymptotic normality as $N, T \to \infty$. Except for parametric situations,
the afore mentioned results do not allow for inferences about the tail parameter $\beta $ in \eqref{betacond} and
testing for the presence or absence of long memory in panel RCAR(1) data.

The present paper discusses in semiparametric context, the estimation of $\beta$ in \eqref{betacond} from RCAR(1) panel  $\{ X_i (t), \, t = 1, \dots, T, \, i = 1, \dots, N \}$ with finite variance $\E X_i^2 (t) < \infty$. We use the fact that \eqref{betacond} implies
$\P ( 1/(1-a_i) > y) \sim (g_1/\beta) y^{-\beta}$, $y \to \infty$, i.e.\ r.v.\ $1/(1-a_i)$ follow a heavy-tailed distribution with index $\beta >1$. Thus, if $a_i$, $i=1,\dots, N$, were observed, $\beta $ could be estimated by a number of tail index estimators, including the Goldie and Smith
estimator in \eqref{best} below.
Given panel data, the unobservable $a_i$ can be estimated by sample lag 1 autocorrelation $\widehat a_{i}$ computed from $\{X_i(1), \dots, X_i(T)\}$
for each $i=1, \dots, N$. This leads to the general estimation problem of $\beta $ for 
`noisy' observations
\begin{equation}\label{hata}
\widehat a_i = a_i + \widehat \rho_i, \qquad i=1, \dots, N,
\end{equation}
where the `noise', or measurement error $\widehat \rho_i = \widehat a_{i} - a_i$ is of unspecified nature and
vanishes with $N \to \infty $.

Related statistical problems where observations contain measurement error
were discussed in several papers. \cite{resn1997}, \cite{lin2004} considered Hill estimation of the tail parameter from residuals of ARMA series.
Kim and Kokoszka (\citeyear{kim2019a}, \citeyear{kim2019b}) discussed asymptotic properties and finite sample performance of Hill's estimator for observations contaminated with i.i.d.\ `noise'.  The last paper contains further references on inference problems with measurement error.

A major distinction between the above mentioned works
and our study is that we estimate the tail behavior of $G$ at a finite point $x=1 $ and
therefore the measurement error should vanish with $N$
which is not required in Kim and Kokoszka (\citeyear{kim2019a}, \citeyear{kim2019b})
dealing with estimation of the tail index at infinity.
On the other hand, except for the `smallness condition' in  \eqref{condrhoN}--\eqref{condN}, no other (dependence or independence)
conditions on the `noise' in \eqref{hata} are assumed, in contrast to Kim and Kokoszka (\citeyear{kim2019a}, \citeyear{kim2019b}),
where the measurement errors are i.i.d.\ and independent of the `true' observations.
The proposed estimator
$\widetilde \beta_N$ in \eqref{best_tilde} is a `noisy' version of the Goldie and Smith estimator, applied to
 observations in \eqref{hata} truncated at a level close to 1.
The main result of our paper is Theorem~\ref{thm:2N} giving sufficient conditions
for asymptotic normality of the  constructed estimator
$\widetilde \beta_N$.  These conditions involve $\beta$ and other asymptotic parameters of $G$ at $x=1$
and the above-mentioned `smallness' condition restricting the choice of the
threshold parameter $\delta = \delta_N \to 0$ in $\widetilde \beta_N$. Theorem~\ref{thm:2N} is applied
to the RCAR(1) panel data, resulting in an asymptotically normal estimator of $\beta $,
where the `smallness condition' on the `noise' is verified provided  $T = T_N$ grows
fast enough with $N$ (Corollary \ref{thm:2}).
Based on the above asymptotic result, we construct a statistical procedure to test the presence of long memory in the panel, more precisely, the null hypothesis $H_0 : \beta \ge 2$ vs.\ the long memory alternative $H_1 : \beta \in (1,2)$.

The paper is organized as follows.
Section~\ref{sec2} contains the definition of the estimator $\widetilde \beta_N$ and
the main  Theorem~\ref{thm:2N} about its asymptotic normality for `noisy' observations.
Section \ref{sec3} provides the
assumptions on  the RCAR(1) panel model, together with application of  Theorem~\ref{thm:2N}
based on the panel data and some consequences.
In Section~\ref{sec4} a simulation study
illustrates finite-sample properties of the introduced estimator
and the testing procedure. Proofs can be found in Section~\ref{sec5}.

In what follows, $C$ stands for a positive constant whose precise value is unimportant and which may change from line to line. We write  $\to_p, \, \to_d$ for the convergence in probability and distribution respectively, whereas $\to_{D[0,1]}$ denotes the weak convergence in the space $D[0,1]$ with the uniform metric. Notation ${\cal N}(\mu, \sigma^2)$ is used for the normal distribution with mean $\mu$ and variance $\sigma^2$.

\section{Estimation of the  tail parameter from `noisy' observations} \label{sec2}

In this section we introduce 
an estimator of the tail parameter $\beta$ in \eqref{betacond} based on `noisy'
observations in \eqref{hata},
where $a_i \in (0,1) $ are i.i.d.\ satisfying \eqref{betacond}, and $\widehat \rho_i = \widehat \rho_{i, N}$ are measurement errors (i.e., arbitrary random variables)
which vanish with $N \to \infty$ at a certain rate, uniformly in $i=1, \dots, N$.

To derive asymptotic results about this estimator, condition \eqref{betacond}  is strengthened as follows.

\bigskip

\noi {\bf  (G)}\ \ $a_i \in (0,1)$, $i=1,2,\ldots$, are independent r.v.s with common
c.d.f.\
$G(x) := \P (a_i \le x)$, $x \in [0,1]$.
There exists $\epsilon \in (0, 1)$ such that $G$ is continuously differentiable on $(1 - \epsilon, 1)$ with derivative satisfying
\begin{equation}\label{cond:G}
g(x) = \kappa \beta (1-x)^{\beta-1} ( 1 + O((1-x)^\nu) ), \quad x \to 1-,
\end{equation}
for some $\beta > 1$, $\nu > 0$ and $\kappa > 0$.

\bigskip

Assumption (G) implies that the tail of the c.d.f.\ of $Y_i := 1/(1-a_i)$ satisfies
\begin{equation}\label{tail}
\P (Y_i > y) = \kappa y^{-\beta} (1 + O(y^{-\nu})), \quad y \to \infty.
\end{equation}

For independent observations $Y_1, \dots, Y_N$ with common c.d.f.\ satisfying \eqref{tail},
\cite{gol1987} introduced the following estimator of the tail index $\beta$:
\begin{equation}\label{est}
\beta_N := \frac{\sum_{i=1}^N \1 (Y_i \ge v)}{\sum_{i=1}^N \1 (Y_i \ge v) \ln (Y_i/v)}, 
\end{equation}
and proved asymptotic normality of this estimator provided the threshold level $v = v_N$ tends to infinity at an appropriate rate as $N \to \infty$.

For independent realizations $a_1, \ldots, a_N$ under assumption~(G), we rewrite the tail index estimator in \eqref{est} as
\begin{equation}\label{best}
\beta_N = \frac{ \sum_{i=1}^N \1 (a_i >1- \delta) }{  \sum_{i=1}^N \1 (a_i >1- \delta) \ln (\delta/(1- a_i))},
\end{equation}
where $\delta := 1/v$ is a threshold close to 0.

\begin{theorem}\label{thm:1}
Assume (G). If $\delta = \delta_N \to 0$ and $N \delta^\beta \to \infty$ and $N \delta^{\beta + 2 \nu} \to 0$ as $N \to \infty$, then
\begin{equation*}
\sqrt{N \delta^{\beta}} (\beta_N - \beta ) \to_d {\cal N}(0,\beta^2/\kappa).
\end{equation*}
\end{theorem}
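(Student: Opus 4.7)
Plan of proof. Write $K_N := \sum_{i=1}^N \mathbf{1}(a_i > 1-\delta)$ and $S_N := \sum_{i=1}^N \mathbf{1}(a_i > 1-\delta) \ln(\delta/(1-a_i))$, so $\beta_N = K_N/S_N$. Set $p_N := \P(a_i > 1-\delta)$ and consider the i.i.d.\ summands $W_i := \mathbf{1}(a_i > 1-\delta)\bigl[1 - \beta \ln(\delta/(1-a_i))\bigr]$, which satisfy $K_N - \beta S_N = \sum_{i=1}^N W_i$. Writing
\begin{equation*}
\sqrt{N\delta^\beta}(\beta_N - \beta) = \frac{(K_N - \beta S_N)/\sqrt{N\delta^\beta}}{S_N/(N\delta^\beta)},
\end{equation*}
the task reduces, via Slutsky, to (i) a CLT for $\sum W_i$, and (ii) a law of large numbers giving $S_N/(N\delta^\beta) \to_p \kappa/\beta$.

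The first step is to compute moments of the building blocks using (G). With the substitution $y = (1-x)/\delta$, the density expansion \eqref{cond:G} yields the asymptotic identities
\begin{equation*}
p_N = \kappa \delta^\beta(1 + O(\delta^\nu)), \qquad \E[\mathbf{1}(a_i > 1-\delta) \ln(\delta/(1-a_i))] = \frac{\kappa \delta^\beta}{\beta}(1 + O(\delta^\nu)),
\end{equation*}
\begin{equation*}
\E[\mathbf{1}(a_i > 1-\delta) \ln^2(\delta/(1-a_i))] = \frac{2\kappa\delta^\beta}{\beta^2}(1 + O(\delta^\nu)),
\end{equation*}
obtained from $\int_0^1 (-\ln y)^k y^{\beta-1}\d y = k!/\beta^{k+1}$. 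Combining these, $\E W_i = O(\delta^{\beta+\nu})$ and $\Var(W_i) = \E W_i^2 - (\E W_i)^2 = \kappa\delta^\beta(1 + O(\delta^\nu))$, the leading $\kappa\delta^\beta$ appearing because the three contributions to $\E W_i^2$ combine as $\kappa\delta^\beta(1 - 2 + 2)$.

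With these moment estimates, the LLN follows from Chebyshev: $\Var(S_N/(N\delta^\beta)) = O(1/(N\delta^\beta)) \to 0$ by the assumption $N\delta^\beta \to \infty$, giving $S_N/(N\delta^\beta) \to_p \kappa/\beta$. For the CLT, by Lyapunov (the $W_i$ are uniformly bounded in absolute value by $C \ln(1/\delta)$, whose relevant power is easily absorbed since $\sum W_i^2$ is of order $N\delta^\beta$),
\begin{equation*}
\frac{\sum_{i=1}^N (W_i - \E W_i)}{\sqrt{N\Var(W_i)}} \to_d \mathcal{N}(0,1),
\end{equation*}
hence $(K_N - \beta S_N)/\sqrt{N\delta^\beta} \to_d \mathcal{N}(0,\kappa)$ provided the recentering is negligible, i.e.\ $N \E W_i / \sqrt{N\delta^\beta} = O(\sqrt{N\delta^{\beta+2\nu}}) \to 0$ — which is exactly the role of the hypothesis $N\delta^{\beta+2\nu} \to 0$. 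Assembling via Slutsky yields $\sqrt{N\delta^\beta}(\beta_N - \beta) \to_d \mathcal{N}(0, \kappa)/(\kappa/\beta) = \mathcal{N}(0, \beta^2/\kappa)$.

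The only delicate point is the cancellation in the leading order of $\Var(W_i)$ and the bias–variance tradeoff: the three conditions on $\delta_N$ conspire so that the variance of $K_N - \beta S_N$ grows exactly like $N\delta^\beta$, while the bias $N\E W_i$ is of smaller order than $\sqrt{N\delta^\beta}$. No further obstacle is anticipated — verifying Lyapunov's condition is routine given the logarithmic growth of $|W_i|$ and the polynomial rate $Np_N \to \infty$.
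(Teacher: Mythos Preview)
Your approach is correct and is essentially the classical Lyapunov-type argument (close in spirit to Goldie and Smith's original proof, which the paper cites as using Lyapunov's CLT conditionally on the number of exceedances). The paper instead rewrites $\beta_N$ in terms of the empirical c.d.f.\ $G_N$, decomposes $\beta_N - \beta = D^{-1}\sum_{i=1}^4 I_i$ where $I_1, I_2$ are integrals of $G_N - G$ and $I_3, I_4$ are deterministic bias terms, and then invokes a functional CLT for the tail empirical process to obtain $(N\delta^{-\beta})^{1/2}(I_1+I_2) \to_d \kappa^{1/2}\bigl(\beta\int_0^1 B(x^\beta)\,\d x/x - B(1)\bigr)$. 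The paper's motivation for this longer route is explicit: the empirical-process decomposition extends cleanly to the noisy estimator $\widetilde\beta_N$ of Theorem~\ref{thm:2N}, where one only needs to add remainder terms $R_i$ comparing $\widehat G_N$ to $G_N$. Your direct CLT on $\sum W_i$ is more elementary and self-contained for the i.i.d.\ case, but would not carry over as readily to the contaminated observations.

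One correction: your claim that the $W_i$ are uniformly bounded by $C\ln(1/\delta)$ is false, since on $\{a_i>1-\delta\}$ the quantity $\ln(\delta/(1-a_i))$ is unbounded above as $a_i\uparrow 1$. This does not damage the argument, however: Lyapunov's condition follows instead from the moment bound $\E|W_i|^{2+\epsilon} = O(\delta^\beta)$ (same substitution $y=(1-x)/\delta$ as in your second-moment computation, using $\int_0^1(-\ln y)^{2+\epsilon}y^{\beta-1}\,\d y<\infty$), which gives the Lyapunov ratio $N\cdot O(\delta^\beta)/(N\kappa\delta^\beta)^{1+\epsilon/2} = O((N\delta^\beta)^{-\epsilon/2}) \to 0$.
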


Theorem \ref{thm:1} is due
to Theorem 4.3.2 in \cite{gol1987}. The proof in \cite{gol1987}
uses Lyapunov's CLT conditionally on the number of exceedances over a threshold. Further sufficient conditions for
asymptotic normality of $\beta_N$ were obtained in \cite{nov1990}.
In Section~\ref{sec5} we give an alternative proof of Theorem~\ref{thm:1} based on the tail empirical process.
Our proof has the advantage that it can be more easily adapted to
prove asymptotic normality of the `noisy' modification  of  \eqref{best} defined as
\begin{equation}\label{best_tilde}
\widetilde \beta_N := \frac{ \sum_{i=1}^N \1 (\widetilde a_i >1- \delta) }
{  \sum_{i=1}^N \1 (\widetilde a_i >1- \delta) \ln (\delta/(1- \widetilde a_i))},
\end{equation}
where $\delta > 0$ is a chosen small threshold and for some $r > 1$, each
\begin{equation}\label{tildea}
\widetilde a_i := \min\{\widehat a_i , 1 - \delta^{r}\}
\end{equation}
is the $\widehat a_i$ of \eqref{hata} truncated at level $1 - \delta^r$ much
closer to 1 than $1-\delta $ in \eqref{best_tilde}.
An obvious reason for the above truncation is that
in general, `noisy' observations in \eqref{hata} need not belong to the interval $(0,1)$ and may exceed 1 in which case the r.h.s.\ of
\eqref{best_tilde} with $\widehat a_i$ instead of $\widetilde a_i$
is undefined.
Even if
$\widehat a_i <1$ as in the  case of the AR(1) estimates in \eqref{han}, the truncation in \eqref{tildea} seem to be necessary
due to the proof of Theorem \ref{thm:2N}. We note a similar truncation of $\widehat a_i$ for technical reasons is used in the parametric context in \cite{ber2010}. On the other hand, our simulations show that when
$r$ is large enough, this truncation has no effect in practice.

\begin{theorem}\label{thm:2N}
Assume (G). As $N \to \infty$, let $\delta=\delta_N \to 0$ so that
\begin{equation}\label{deltaNN}
N \delta^\beta \to \infty  \quad  \text{and} \quad  N \delta^{\beta + 2 \min\{\nu, (r-1)\beta \}} \to 0.
\end{equation}
In addition, let
\begin{eqnarray}\label{condrhoN}
\max_{1\le i \le N} \P(|\widehat \rho_{i}| > \varepsilon) &\le& \frac{\chi}{\varepsilon^p} + \chi', \qquad \forall \varepsilon \in (0,1),
\end{eqnarray}
where $\chi= \chi_{N}, \chi' = \chi'_{N} \to 0$ satisfy
\begin{eqnarray}\label{condN}
\sqrt{ N \delta^{\beta} } \max \Big\{\frac{\chi'}{\delta^\beta}, \Big(\frac{\chi}{\delta^{p+\beta}}\Big)^{1/(p+1)} \Big\} \ln \delta \to 0  \label{cond2N}
\end{eqnarray}
for some $p \ge 1$.
Then
\begin{equation} \label{CLTN}
\sqrt{ N \delta^{\beta} } ( \widetilde \beta_N - \beta) \to_d {\cal N} (0, \beta^2/\kappa).
\end{equation}
\end{theorem}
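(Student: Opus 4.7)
My plan is to reduce Theorem~\ref{thm:2N} to Theorem~\ref{thm:1} by showing $\sqrt{N\delta^\beta}(\widetilde \beta_N - \beta_N) \to_p 0$, where $\beta_N$ is the clean Goldie--Smith estimator from \eqref{best}. The hypotheses of Theorem~\ref{thm:1} follow from \eqref{deltaNN} since $\min\{\nu,(r-1)\beta\}\le \nu$ forces $N\delta^{\beta+2\nu}\to 0$; thus Theorem~\ref{thm:1} already supplies $\sqrt{N\delta^\beta}(\beta_N - \beta)\to_d \mathcal{N}(0,\beta^2/\kappa)$ and, \emph{a fortiori}, $\beta_N\to_p \beta$.

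Set $A_N^0 := \sum_i \1(a_i > 1-\delta)$, $B_N^0 := \sum_i \1(a_i > 1-\delta)\ln(\delta/(1-a_i))$, and define $A_N,B_N$ analogously from $\widetilde a_i$. Because $1-\delta<1-\delta^r$ for $r>1$, the truncation in \eqref{tildea} gives $A_N = \sum_i \1(\widehat a_i > 1-\delta)$. A direct first/second-moment computation under (G) shows $A_N^0/(N\delta^\beta)\to_p \kappa$ and $B_N^0/(N\delta^\beta)\to_p \kappa/\beta$, and these transfer to $A_N,B_N$ once the $L^1$ bounds below are in hand. Writing
\begin{equation*}
\widetilde\beta_N - \beta_N \;=\; \frac{B_N^0(A_N - A_N^0) - A_N^0(B_N - B_N^0)}{B_N^0\, B_N},
\end{equation*}
the problem reduces to
\begin{equation*}
\E|A_N-A_N^0| = o\bigl(\sqrt{N\delta^\beta}\bigr)\qquad\text{and}\qquad \E|B_N-B_N^0| = o\bigl(\sqrt{N\delta^\beta}\bigr).
\end{equation*}

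The first bound rests on the deterministic inequality $|\1(a_i+\widehat\rho_i > 1-\delta) - \1(a_i > 1-\delta)| \le \1(|a_i-(1-\delta)|\le \eta) + \1(|\widehat\rho_i|>\eta)$, valid for every $\eta>0$. Assumption (G) yields $\P(|a_1-(1-\delta)|\le\eta) \le C\eta\delta^{\beta-1}$ when $\eta\le\delta$, while \eqref{condrhoN} controls the noise term; optimising at $\eta \propto (\chi/\delta^{\beta-1})^{1/(p+1)}$ produces exactly the rate on the right of \eqref{condN} modulo the $|\ln\delta|$ factor. The more delicate step is $|B_N-B_N^0|$, and it is precisely here that the truncation at $1-\delta^r$ becomes essential, because $t\mapsto \ln(\delta/(1-t))$ is unbounded as $t\to 1$. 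I would split $B_N-B_N^0$ into (i) a truncation piece supported on $\{a_i > 1-\delta^r\}$, whose $L^1$ norm is $O(N\delta^{r\beta})$ by a direct computation using (G), and which is $o(\sqrt{N\delta^\beta})$ exactly when $N\delta^{\beta+2(r-1)\beta}\to 0$; and (ii) a boundary/crossing piece, handled by the same $\eta$-splitting as for $\Delta A_N$ but with each indicator now multiplied by the log-factor $\le (r-1)|\ln\delta|$ guaranteed by the truncation --- this is precisely what introduces the $|\ln\delta|$ in \eqref{condN}. The main obstacle will be the case analysis in (ii): when $a_i$ and $\widehat a_i$ lie in different regions of $(1-\delta,1-\delta^r]\cup(1-\delta^r,1)$ or on opposite sides of $1-\delta$, the logarithmic ratio $|\ln((1-a_i)/(1-\widetilde a_i))|$ must be combined multiplicatively (rather than additively) with the boundary-crossing probability so that the final $L^1$ bound matches \eqref{condN} --- crucially, without appealing to any independence among the $\widehat\rho_i$.
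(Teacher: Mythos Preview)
Your overall strategy --- reduce to Theorem~\ref{thm:1} by showing $\sqrt{N\delta^\beta}(\widetilde\beta_N-\beta_N)\to_p 0$ through $L^1$ control of $A_N-A_N^0$ and $B_N-B_N^0$ --- is sound and is, up to bookkeeping, what the paper does (there $\widetilde\beta_N-\beta=\widetilde D^{-1}\sum(I_i+R_i)$, with the $I_i$ from Theorem~\ref{thm:1} and the $R_i$ carrying the noise and truncation corrections). Your treatment of $A_N-A_N^0$ via the $\eta$-split is exactly Proposition~\ref{prop:2} at the single level $x=\delta$, with the same optimised $\eta$.

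The gap is in $B_N-B_N^0$. Your piece~(i) is off by a (harmless) $|\ln\delta|$: one has $\E[\1(a_i>1-\delta^r)\ln(\delta/(1-a_i))]\sim\kappa(r-1)\delta^{r\beta}|\ln\delta|$, not $O(\delta^{r\beta})$. More seriously, in~(ii) the device ``multiply the indicator-difference bound by $(r-1)|\ln\delta|$'' only covers the event where \emph{exactly one} of $a_i,\widehat a_i$ exceeds $1-\delta$. When both exceed $1-\delta$ the indicator difference vanishes, yet $|\ln((1-a_i)/(1-\widetilde a_i))|$ survives; on $\{|\widehat\rho_i|\le\eta\}$ the obvious bound $\eta/\min\{1-a_i,1-\widetilde a_i\}$ can be as bad as $\eta/\delta^r$, producing a rate $\sqrt{N\delta^\beta}\,(\chi/\delta^{p+\beta})^{1/(p+1)}\delta^{1-r}$, which blows up for $r>1$ and does not reproduce~\eqref{condN}. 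You correctly flag this as ``the main obstacle'' but leave it unresolved, and a bare case analysis at the two thresholds $1-\delta$, $1-\delta^r$ will not close it.

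The paper sidesteps the case analysis by the integral representation
\[
B_N=N\int_{\delta^r}^{\delta}\bigl(1-\widehat G_N(1-x)\bigr)\frac{\d x}{x},\qquad
B_N^0=N\int_{0}^{\delta}\bigl(1-G_N(1-x)\bigr)\frac{\d x}{x},
\]
whence $B_N-B_N^0=N\int_{\delta^r}^{\delta}(G_N-\widehat G_N)(1-x)\,\d x/x-N\int_{0}^{\delta^r}(1-G_N(1-x))\,\d x/x$. The second integral has $L^1$ norm $O(N\delta^{r\beta})$ (no log), giving exactly the $(r-1)\beta$ half of~\eqref{deltaNN}. For the first, bound $\E|\widehat G_N(1-x)-G_N(1-x)|$ \emph{uniformly} over $x\in[\delta^r,\delta]$ by the same $\eta$-split you used for $A_N-A_N^0$; then $\int_{\delta^r}^{\delta}\d x/x=(r-1)|\ln\delta|$ supplies precisely the logarithm in~\eqref{condN}. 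There is no ``log-ratio'' term to handle, and the truncation at $1-\delta^r$ appears only as the lower limit of integration.
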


\section{Estimation of the tail parameter for RCAR(1) panel} \label{sec3}

Let $X_i := \{ X_i (t), \, t \in \Z \}$, $i=1, 2,\dots$, be stationary random-coefficient AR(1) processes in \eqref{RCAR1},
where innovations admit the following decomposition:
\begin{equation}\label{innov}
\zeta_i (t) = b_i \eta(t) + c_i \xi_i (t), \quad t \in \Z, \quad i = 1,2, \dots
\end{equation}
Let the following assumptions hold:
\bigskip

\noi {\bf  (A1)}\ \ $\eta (t)$, $t \in \Z$, are i.i.d.\ with
$\E \eta(t) = 0$, $\E \eta^2(t) = 1$,
$\E |\eta(t)|^{2p} < \infty$ for some $p > 1$.

\bigskip

\noi {\bf (A2)}\ \ $\xi_i (t)$, $t \in \Z$, $i=1,2,\ldots$,  are i.i.d.\ with
$\E \xi_i(t) = 0$, $\E \xi^2_i(t) = 1$,
$\E | \xi_i(t) |^{2p} < \infty$ for the same $p > 1$ as in (A1).

\bigskip

\noi {\bf (A3)}\ \ $(b_i, c_i)$, $i=1,2, \dots$ are i.i.d.\ random vectors with possibly dependent components $b_i  \ge 0$, $c_i \ge 0$ satisfying $\P (b_i + c_i = 0) = 0$ and $\E (b_i^2 + c_i^2) < \infty$.

\bigskip

\noi {\bf (A4)}\ \ $\{ \eta (t), \, t \in \Z \}$, $\{ \xi_i (t), \, t \in \Z \}$, $a_i$ and $(b_i,c_i)$
are mutually independent for each $i=1, 2, \ldots$

\bigskip

Assumptions (A1)--(A4) about the innovations are very general and allow a uniform treatment of
common shock (case $(b_i ,c_i) = (1,0)$) and idiosyncratic shock (case $(b_i,c_i)=(0,1)$) situations.
Similar  assumptions about the innovations are made in \cite{lei2016}.
Under assumptions (A1)--(A4) and (G), there exists a unique strictly stationary solution of \eqref{RCAR1} given by
\begin{eqnarray*} 
 X_i (t) = \sum_{s\le t} a_i^{t-s} \zeta_i (s), \quad t \in \Z,
\end{eqnarray*}
with $\E X_i (t) = 0$ and $\E X_i^2 (t) = \E (b_i^2 + c_i^2) \E (1-a^2_i)^{-1} < \infty$, see  \cite{lei2016}.

From the panel RCAR(1) data $\{ X_i (t), \ t=1, \ldots,T, \ i = 1, \ldots, N\}$
we compute sample lag~1 autocorrelation coefficients
\begin{eqnarray}\label{han}
\widehat a_i := \frac{\sum_{t=1}^{T-1} (X_i(t) - \overline X_i)  (X_i (t+1)- \overline X_i)}{\sum_{t=1}^{T} (X_i (t)- \overline X_i)^2},
\end{eqnarray}
where $\overline X_i := T^{-1} \sum_{t=1}^T X_i (t)$ is the sample mean, $i =1, \ldots, N$.
By the Cauchy-Schwarz inequality, the estimator $\widehat a_i$ in
\eqref{han}  does not exceed 1 in absolute value a.s. Moreover,
$\widehat a_i$ is invariant under the shift and scale transformations of the RCAR(1) process in \eqref{RCAR1}, i.e., we can replace $X_i$ by $\{\sigma_i X_i(t) + \mu_i, \, t \in \Z \}$ with some (unknown) $\mu_i \in \R$ and $\sigma_i > 0$ for every $i =1,2, \ldots$.

To estimate the tail parameter $\beta $ from `noisy' observations $\widehat a_i, i=1, \dots, N$, in \eqref{han} we use the estimator
$\widetilde \beta_N $ in \eqref{best_tilde}.  The crucial `smallness condition' \eqref{condrhoN} on the `noise'
$\widehat \rho_i = \widehat a_i - a_i $ is a consequence of the following result.

\begin{proposition}[\cite{lei2016}]\label{prop:1}   Assume (G) and (A1)--(A4). Then for all $\varepsilon \in (0,1)$ and $T \ge 1$, it holds
	\begin{equation*}
	\P ( |\widehat a_1 - a_1 | > \varepsilon ) \le C (T^{-\min\{p-1,p/2\} } \varepsilon^{-p} + T^{-1} )
	\end{equation*}
	with $C > 0$ independent of $\varepsilon$, $T$.
\end{proposition}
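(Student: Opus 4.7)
The plan is to condition on $(a_1,b_1,c_1)$, bound $\P(|\widehat a_1-a_1|>\varepsilon\mid a_1,b_1,c_1)$ with explicit $(1-a_1^2)$-dependence, and then integrate against the joint law using $\E(1-a_1)^{-1}<\infty$ from assumption~(G) with $\beta>1$. Since $\widehat a_1$ is scale invariant, I may replace $X_1$ by the unit-variance rescaling $Y_1(t):=X_1(t)\sqrt{(1-a_1^2)/(b_1^2+c_1^2)}$; conditionally, $Y_1$ is stationary AR(1) with coefficient $a_1$ and innovations $\zeta^Y_1(t)$ of variance $1-a_1^2$ and bounded $2p$-th moment. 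Writing $\widehat\gamma^Y(0),\widehat\gamma^Y(1)$ for the sample autocovariances of $Y_1$, I obtain $\widehat a_1-a_1=N_T/\widehat\gamma^Y(0)$, and expanding the numerator via $Y_1(t)=a_1Y_1(t-1)+\zeta^Y_1(t)$ produces $N_T=M_T+B_T$ with
\begin{equation*}
M_T:=T^{-1}\sum_{t=2}^T Y_1(t-1)\zeta^Y_1(t)
\end{equation*}
a martingale-difference sum and $B_T$ an $O(T^{-1})$ remainder collecting $\overline Y_1$, $Y_1(T)$, and an explicit factor $(1-a_1)$. I split
\begin{equation*}
\{|\widehat a_1-a_1|>\varepsilon\}\subseteq\{\widehat\gamma^Y(0)<1/2\}\cup\{|M_T|>\varepsilon/4\}\cup\{|B_T|>\varepsilon/4\}
\end{equation*}
and handle the three events separately.

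The core estimate concerns $M_T$. Conditional on $(a_1,b_1,c_1)$, Burkholder--Davis--Gundy gives
\begin{equation*}
\E\bigl[|M_T|^p\bigm|a_1,b_1,c_1\bigr]\le C_p T^{-p}\,\E\Bigl[\Bigl(\sum_{t=2}^T Y_1(t-1)^2\zeta^Y_1(t)^2\Bigr)^{p/2}\Bigm|a_1,b_1,c_1\Bigr].
\end{equation*}
For $1<p\le 2$ the exponent $p/2\le 1$ permits subadditivity $(\sum x_t)^{p/2}\le \sum x_t^{p/2}$, which combined with the independence $Y_1(t-1)\perp\zeta^Y_1(t)$ gives $T\cdot\E|Y_1|^p\E|\zeta^Y_1|^p\le CT(1-a_1^2)^{p/2}$, hence $\E|M_T|^p\le CT^{-(p-1)}(1-a_1^2)^{p/2}$. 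For $p>2$ I replace subadditivity by H\"older $(\sum x_t)^{p/2}\le T^{p/2-1}\sum x_t^{p/2}$ to arrive at $\E|M_T|^p\le CT^{-p/2}(1-a_1^2)^{p/2}$. Markov at the $p$-th moment and the boundedness of $\E(1-a_1^2)^{p/2}$ then yield the promised $CT^{-\min\{p-1,p/2\}}\varepsilon^{-p}$ contribution.

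For the denominator event I use that the stationary series $Y_1(t)^2-1$ has exponentially decaying covariances with rate $a_1^{2|h|}$. For $p\ge 2$ a direct second-moment computation gives a conditional variance of $\widehat\gamma^Y(0)$ bounded by $C/(T(1-a_1^2))$, so Chebyshev and integration against $g$ via $\E(1-a_1^2)^{-1}<\infty$ produce the $CT^{-1}$ term. For $1<p<2$ an $L^p$ analogue (von Bahr--Esseen applied to blocks of exponentially decorrelating increments) gives a slower $T^{-(p-1)}$ rate, which is already absorbed by the numerator bound since $\varepsilon<1$. The boundary contribution $B_T$ has each summand of order $T^{-1}$ in the appropriate conditional $L^p$-norm and is likewise dominated.

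The principal obstacle is the $(1-a_1^2)$-bookkeeping across the three pieces. The martingale bound carries a positive power $(1-a_1^2)^{p/2}$ while the denominator bound carries a negative power $(1-a_1^2)^{-1}$; uniformly controlling $\E|Y_1|^{2p}$ in $a_1$ requires Rosenthal's inequality applied to the moving-average representation $Y_1(t)=\sum_{s\le t}a_1^{t-s}\zeta^Y_1(s)$ together with the identity $(1-a_1^2)\sum_{s\le t}a_1^{2(t-s)}=1$. The net power of $(1-a_1^2)$ in each conditional bound must be no more negative than $-1$ so that integration against $g$ converges, since under (G) only $\E(1-a_1)^{-q}<\infty$ for $q<\beta$ is guaranteed and we merely have $\beta>1$.
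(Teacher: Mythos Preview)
The paper does not give its own proof of this proposition; it is imported verbatim from \cite{lei2016}, so there is no in-paper argument to compare against. Your sketch is an independent reconstruction of that cited result.

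The route you propose is the natural one and agrees with the standard treatment: condition on $(a_1,b_1,c_1)$, exploit the scale invariance of $\widehat a_1$ to pass to the unit-variance process $Y_1$, write $\widehat a_1-a_1=(\widehat\gamma^Y(1)-a_1\widehat\gamma^Y(0))/\widehat\gamma^Y(0)$, and isolate the martingale sum $M_T=T^{-1}\sum Y_1(t-1)\zeta^Y_1(t)$ in the numerator. Your BDG computation giving $\E[|M_T|^p\mid a_1,b_1,c_1]\le C T^{-\min\{p-1,p/2\}}(1-a_1^2)^{p/2}$ is correct, as is the uniform bound $\E|Y_1|^{2p}\le C$ via Rosenthal applied to the moving-average representation. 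The $T^{-1}$ contribution from the denominator event, integrated using $\E(1-a_1)^{-1}<\infty$ from $\beta>1$, is also on target.

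Two places in the sketch need more care than you give them. First, the $1<p<2$ denominator bound via ``von Bahr--Esseen on blocks'' is asserted rather than verified, and you do not check that the resulting conditional bound carries a power of $(1-a_1)$ no more negative than $-1$. Second, among the boundary terms in $B_T$ the piece $a_1T^{-1}(Y_1(T)-\overline Y_1)^2$ has no explicit $(1-a_1)$ prefactor, and since $\E|\overline Y_1|^{2p}\asymp T^{-p}(1-a_1)^{-p}$, taking the $p$-th moment would require $\E(1-a_1)^{-p}<\infty$, which (G) does not guarantee. The fix is to use a first- or second-moment Markov bound on such pieces, producing rates of the form $T^{-2}\varepsilon^{-1}$ which one then checks case-by-case are dominated by $T^{-\min\{p-1,p/2\}}\varepsilon^{-p}+T^{-1}$ uniformly over $T\ge 1$, $\varepsilon\in(0,1)$. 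Your final paragraph rightly flags the $(1-a_1)$-bookkeeping as the crux, but the resolution is a little more delicate than ``no more negative than $-1$'' in every term.
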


The application of Theorem \ref{thm:2N} leads to the following corollary.

\begin{corollary}\label{thm:2}
Assume (G) and (A1)--(A4). As $N \to \infty$, let $\delta = \delta_N \to 0$ so that
\begin{equation}\label{deltaN}
N \delta^\beta \to \infty  \quad  \text{and} \quad  N \delta^{\beta + 2 \min\{\nu, (r-1)\beta \}} \to 0,
\end{equation}
in addition, let $T = T_N \to \infty$ so that
\begin{eqnarray}\label{cond1}
\sqrt{ N \delta^{\beta} } \gamma \ln \delta \to 0  &&\text{if } \ 1 < p \le 2,\\
\sqrt{ N \delta^{\beta} } \max \Big\{ \frac{1}{T \delta^\beta}, \gamma \Big\} \ln \delta \to 0  &&\text{if } \ 2 < p < \infty,\label{cond2}
\end{eqnarray}
where
\begin{eqnarray}
\gamma := \frac{1}{(T^{\min \{p-1, p/2\}} \delta^{p+\beta})^{1/(p+1)}} \to  0.
\end{eqnarray}
Then
\begin{equation} \label{CLT}
\sqrt{ N \delta^{\beta} } ( \widetilde \beta_N - \beta) \to_d {\cal N} (0, \beta^2/\kappa).
\end{equation}
\end{corollary}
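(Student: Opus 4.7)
The plan is to reduce Corollary~\ref{thm:2} to Theorem~\ref{thm:2N} applied to the observations $\widehat a_i = a_i + \widehat \rho_i$ in \eqref{hata}, where $\widehat \rho_i := \widehat a_i - a_i$ is the sample autocorrelation error. Condition (G) is assumed directly and hypothesis \eqref{deltaN} coincides verbatim with \eqref{deltaNN}, so the only real content is to verify \eqref{condrhoN}--\eqref{condN}. The key input is Proposition~\ref{prop:1}, which under (A1)--(A4) and (G) delivers, uniformly in $i = 1, \ldots, N$,
\begin{equation}\label{planbound}
\P(|\widehat \rho_i| > \varepsilon) \le C\bigl(T^{-\min\{p-1,p/2\}}\varepsilon^{-p} + T^{-1}\bigr), \qquad \varepsilon \in (0,1).
\end{equation}
After reading off suitable $\chi,\chi'$ from \eqref{planbound} and substituting into \eqref{condN}, one checks that the resulting requirement matches \eqref{cond1} or \eqref{cond2}.

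I would split the analysis by the range of $p$. For $1 < p \le 2$ we have $\min\{p-1, p/2\} = p-1 \le 1$, hence $T^{-1} \le T^{-(p-1)}$; combined with $\varepsilon^{-p} \ge 1$ on $(0,1]$, this lets me absorb the second summand in \eqref{planbound} into the first, giving $\P(|\widehat \rho_i| > \varepsilon) \le 2CT^{-(p-1)}\varepsilon^{-p}$. I may then take $\chi = 2CT^{-(p-1)}$ and $\chi' = 0$, so the $\chi'/\delta^\beta$ entry of the max in \eqref{condN} vanishes, while $(\chi/\delta^{p+\beta})^{1/(p+1)}$ is a constant multiple of $\gamma$; thus \eqref{condN} reduces to \eqref{cond1}. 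For $p > 2$ we have $\min\{p-1, p/2\} = p/2 > 1$, so $T^{-1}$ now dominates $T^{-p/2}$ and cannot be absorbed; taking $\chi = CT^{-p/2}$ and $\chi' = CT^{-1}$ yields $\chi'/\delta^\beta = C/(T\delta^\beta)$ and $(\chi/\delta^{p+\beta})^{1/(p+1)}$ proportional to $\gamma$, so \eqref{condN} becomes precisely \eqref{cond2}.

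Once \eqref{condrhoN}--\eqref{condN} are verified in both cases, the conclusion \eqref{CLT} is immediate from Theorem~\ref{thm:2N}. There is no serious obstacle here; the only subtlety is the dichotomy at $p = 2$, which is also the reason the hypothesis of the corollary comes in two forms: for $p \le 2$ the `$1/(T\delta^\beta)$' summand appearing in \eqref{cond2} is automatically dominated by $\gamma$ (so only the $\gamma$-requirement \eqref{cond1} remains), whereas for $p > 2$ it becomes a genuinely independent requirement on how fast $T$ must grow with $N$.
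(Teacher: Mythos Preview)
Your proposal is correct and follows essentially the same route as the paper: invoke Proposition~\ref{prop:1} to read off $\chi$ and $\chi'$ from the bound on $\P(|\widehat\rho_i|>\varepsilon)$, verify that \eqref{condN} then coincides with \eqref{cond1}--\eqref{cond2}, and apply Theorem~\ref{thm:2N}. The only cosmetic difference is that the paper keeps $\chi' = T^{-1}$ uniformly in $p$ (relying implicitly on the fact that $1/(T\delta^\beta)$ is dominated by $\gamma$ when $1<p\le 2$), whereas you handle the case $1<p\le 2$ by absorbing the $T^{-1}$ term and setting $\chi'=0$, which makes the reduction to \eqref{cond1} more transparent.
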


\begin{remark} Condition \eqref{deltaN} restricts the choice of $\delta$ and reduces to that of Theorem \ref{thm:1} with $r$ increasing. In particular, if $\delta = \operatorname{const} N^{-\rm b}$ for some ${\rm b} > 0$
then condition \eqref{deltaN} for $r \ge 2$, $\nu =1 < \beta$
 requires
\begin{equation} \label{bb}
\frac{1}{\beta + 2} < {\rm b} < \frac{1}{\beta}.
\end{equation}	
In view of \eqref{CLT} it makes sense to choose $\rm b$ as large as possible in order to guarantee the fastest convergence rate of the estimator of $\beta$.
Assume $p > 2$ in (A1), (A2). If $\delta = \operatorname{const} N^{-\rm b}$ and $T = N^{\rm a}$ for some ${\rm b} > 0$ satisfying \eqref{bb} and ${\rm a} > 0$, then condition \eqref{cond2} is equivalent to
$$
{\rm a} > \max \Big\{ \frac{1 + {\rm b} \beta}{2}, \frac{1 + {\rm b} \beta}{p} + (2-\beta) {\rm b} + 1 \Big\},
$$
which becomes less restrictive with $p$ increasing and
in the limit $p=\infty $ becomes
\begin{equation} \label{bT2}
{\rm a} > \max \Big\{ \frac{1 + {\rm b} \beta}{2}, (2-\beta) {\rm b} + 1 \Big\}.
\end{equation}
Since for $\beta \in (1,2)$, the lower bound in \eqref{bT2} is $ 1+ (2-\beta){\rm b} > 4/(\beta + 2) > 1$, we conclude that $T$ should grow much faster than $N$.
In general, our results apply to sufficiently long panels.
\end{remark}

Similarly as in the i.i.d.\ case (see \cite{gol1987}), the normalization in \eqref{CLT} can be replaced by
a random quantity expressed in terms of $\widetilde a_i$, $i=1, \dots,N$, alone. That is an actual number of observations usable for inference.

\begin{corollary}\label{cor:1}
Set $\widetilde K_N := \sum_{i=1}^{N} \1 (\widetilde a_i >
1-\delta)$. Under the assumptions of Corollary~\ref{thm:2},
\begin{equation}\label{CLT2}
\sqrt{ \widetilde K_N } (\widetilde \beta_N - \beta) \to_d {\cal N} (0, \beta^2).
\end{equation}
\end{corollary}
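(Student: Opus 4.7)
The plan is to reduce this to Corollary~\ref{thm:2} via Slutsky's theorem after showing that the random normalizer $\widetilde K_N$ behaves like its deterministic counterpart $N\delta^\beta \kappa$. Writing
\begin{equation*}
\sqrt{\widetilde K_N}(\widetilde \beta_N - \beta) = \sqrt{\frac{\widetilde K_N}{N \delta^\beta}} \cdot \sqrt{N \delta^\beta}(\widetilde \beta_N - \beta),
\end{equation*}
Corollary~\ref{thm:2} handles the second factor, and if we can prove
\begin{equation*}
\frac{\widetilde K_N}{N \delta^\beta} \to_p \kappa
\end{equation*}
then Slutsky yields $\sqrt{\widetilde K_N}(\widetilde \beta_N - \beta) \to_d \sqrt{\kappa}\cdot \mathcal{N}(0,\beta^2/\kappa) = \mathcal{N}(0,\beta^2)$, as required.

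To establish convergence of the normalizer, first I would introduce the clean counterpart $K_N := \sum_{i=1}^N \1(a_i > 1-\delta)$. Under assumption (G), integrating \eqref{cond:G} gives $\P(a_i > 1-\delta) = \kappa \delta^\beta(1 + O(\delta^\nu))$, hence $\E K_N/(N\delta^\beta) \to \kappa$, and $\Var(K_N)/(N\delta^\beta)^2 \le 1/(N\delta^\beta) \to 0$ by the first part of \eqref{deltaN}. Chebyshev then yields $K_N/(N\delta^\beta) \to_p \kappa$.

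Next, I would control the discrepancy $|\widetilde K_N - K_N|$, which counts indices where $\1(\widetilde a_i > 1-\delta) \ne \1(a_i > 1-\delta)$. Such an index requires $|\widetilde a_i - a_i| \ge |a_i - (1-\delta)|$ to straddle the threshold; in particular, conditional on the boundary event, one has $|\widehat \rho_i| > \varepsilon$ for some small $\varepsilon$ (up to the truncation level $1-\delta^r$ which, for $\delta$ small, lies strictly above $1-\delta$). Using Proposition~\ref{prop:1} together with a union bound (or taking expectations and using the density bound near $x=1$), one obtains $\E|\widetilde K_N - K_N|/(N\delta^\beta) \to 0$ under conditions \eqref{deltaN}--\eqref{cond2} — indeed these are precisely the conditions already exploited in the proof of Theorem~\ref{thm:2N} to handle the numerator of $\widetilde \beta_N$, and the argument for $\widetilde K_N - K_N$ is a strictly simpler version (no logarithmic weight).

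The main obstacle, as in Theorem~\ref{thm:2N}, is the bookkeeping around the truncation $\widetilde a_i = \min\{\widehat a_i, 1-\delta^r\}$ and the boundary crossings near $1-\delta$; but given the rate restrictions already assumed in Corollary~\ref{thm:2}, this calculation is routine and parallels what is done for the numerator in the proof of Theorem~\ref{thm:2N}. Combining the two pieces gives $\widetilde K_N/(N\delta^\beta) \to_p \kappa$, and Slutsky's theorem completes the proof of \eqref{CLT2}.
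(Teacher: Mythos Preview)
Your proposal is correct and follows essentially the same route as the paper: both reduce to showing $\widetilde K_N/(N\delta^\beta)\to_p\kappa$ by first establishing $K_N/(N\delta^\beta)\to_p\kappa$ via mean/variance and then controlling $\widetilde K_N-K_N$, concluding with Slutsky. The only packaging difference is that the paper handles $\widetilde K_N-K_N$ by directly invoking relation~\eqref{rmd2} of Proposition~\ref{prop:2} (noting that, since $1-\delta^r>1-\delta$, one has $\widetilde K_N=N(1-\widehat G_N(1-\delta))$), whereas you sketch that same argument from Proposition~\ref{prop:1}; the content is identical.
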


The CLTs in \eqref{CLT} and \eqref{CLT2} provide not only consistency of the estimator but also asymptotic confidence intervals for the parameter $\beta$.
The last result can be also used
for testing of long memory in independent RCAR(1) series 
which occurs if $\beta \in (1,2)$.
Note that $\beta=2$ appears as
the boundary between long and short memory.
Indeed, in this case the autocovariance
function of RCAR(1) is not absolutely summable, but the iterated limit of the sample mean of the panel
data follows a normal distribution as for $\beta>2$ (see
\cite{Nedenyi2016}, \cite{pil2014}). Since it is more important to control the
risk of false acceptance of long memory, we choose  
the null hypothesis $H_0 : \beta \ge 2$ vs.\ the
alternative $H_1 : \beta < 2$.
We use the following test statistic
\begin{equation}\label{LMtest}
\widetilde Z_N := \sqrt{\widetilde K_N} (\widetilde \beta_N - 2) / \widetilde \beta_N.
\end{equation}
According to  Corollary \ref{cor:1},  we have
$$\widetilde Z_N \to_d
\begin{cases} {\cal N}(0,1) & \text{if } \beta = 2,\\
 + \infty & \text{if } \beta >  2,\\
 -\infty  & \text{if } \beta < 2.\\
\end{cases}
$$
Fix  $\omega \in (0,1)$ and denote by $z(\omega)$ the
$\omega$-quantile of the standard normal distribution.  The
rejection region $\{\widetilde Z_N < z(\omega)\}$ has asymptotic
level $\omega$ for testing the null hypothesis $H_0 : \beta\geq 2$, and
is consistent against the alternative $H_1 : \beta < 2$.

\section{Simulation study}
\label{sec4}

We examine finite sample performance of the estimator $\widetilde \beta_N$ in \eqref{best_tilde} and the
testing procedure $\widetilde{Z}_N < z(\omega)$ for $H_0 : \beta \ge 2$ at significance level $\omega$.
We compare them with the estimator $\beta_N $ in \eqref{best} and
the test $Z_N := \sqrt{K_N} (\beta_N - 2)/\beta_N  < z(\omega)$, where $K_N := \sum_{i=1}^N \1 (a_i > 1-\delta)$, both based on i.i.d.\ (unobservable) AR coefficients
$a_1, \dots, a_N$.

We consider a panel $\{ X_i (t), \, t= 1,\dots,T, \, i=1,\dots,N \}$, which comprises $N$ independent RCAR(1) series of length $T$. Each of them is generated from i.i.d.\ standard normal innovations $\{ \zeta_i (t) \} \equiv \{ \xi_i (t) \}$ in \eqref{innov} with AR 
coefficient $a_i$ independently drawn from the beta-type density
\begin{equation}\label{betag}
g(x) = \frac{2}{\operatorname{B}(\alpha, \beta)} x^{2\alpha-1} (1-x^2)^{\beta-1}, \quad x \in (0,1),
\end{equation}
with parameters $\alpha > 0$, $\beta > 1$, where $\operatorname{B} (\alpha,\beta) = \Gamma(\alpha) \Gamma (\beta)/\Gamma(\alpha+\beta)$ denotes the beta function.
In this case, the squared
coefficient $a^2_i$ is beta distributed with parameters
$(\alpha,\beta)$. Note \eqref{betag} satisfies \eqref{cond:G} with $\kappa \beta = 2^\beta / \operatorname{B}(\alpha,\beta)$ and $\nu = 1$ if $4\alpha+\beta \neq 3$.
Then RCAR(1) process admits explicit
(unconditional) autocovariance function
\begin{eqnarray} \label{betar}
\E X_i (0) X_i (t) = \E \frac{a_i^{|t|}}{1-a_i^2} = \frac{\operatorname{B}(\alpha+|t|/2,\beta-1)}{\operatorname{B}(\alpha,\beta)}
\sim \frac{\kappa \Gamma(\beta)}{2} t^{-(\beta -1)}, \quad t \to \infty,
\end{eqnarray}
which follows by $\Gamma(t)/\Gamma(t + c) \sim t^{-c}$, $t\to\infty $. The (unconditional) spectral density $f(\lambda)$, $\lambda \in [-\pi,\pi]$, of the RCAR(1) process satisfies
\begin{eqnarray}\label{betaf}
f(\lambda) \ =\ \frac{1}{2\pi} \E |1 -a \e^{-\i \lambda}|^{-2} &\sim&
\kappa_f
\begin{cases}1, &\beta > 2, \\
\ln(1/\lambda), &\beta =2, \\
\lambda^{-(2-\beta)},  &\beta < 2,
\end{cases} \quad \lambda \to 0+,
\end{eqnarray}
where $\kappa_f = (2\pi)^{-1} \E (1-a)^{-2}$ ($\beta > 2$)
and $\kappa_f = \kappa(2\pi)^{-1}$ $(\beta =2)$, $\kappa_f = \kappa \beta(2\pi)^{-1} \int_0^\infty y^{\beta-1} (1+y^2)^{-1} \d y$ $(1<\beta<2)$ (see \cite{lei2014}).
From \eqref{betar}, \eqref{betaf} we see that (unconditionally) $X_i$ behaves as $I(0)$ process for $\beta > 2$ and as $I(d)$ process for $\beta \in (1,2)$ with fractional integration parameter $d =  1- \beta/2 \in (0, 1/2)$.  Particularly,
$\beta = 1.5 $ corresponds to $d = 0.25 $ (the middle point on the interval $(0,1/2)$), whereas $\beta = 1.75$ to
$d = 0.125$. Increasing  parameter $\alpha $ `pushes' the distribution of the AR coefficient towards $x=1$, see Figure \ref{fig:3} [left], and affects the asymptotic constants of $g(x)$ as $x \to1-$.
A somewhat unexpected feature of this model is a considerable amount of `spurious' long memory for $\beta > 2 $. Figure \ref{fig:3} [right] shows
the graph of the spectral density in \eqref{betaf} which is bounded though sharply increases at the origin for $\beta = 2.5 $, $\alpha \ge 1.5$. One may
expect that most time series tests 
applied to a (Gaussian) process with a spectral density as the one in Figure \ref{fig:3} [right]
for $\beta = \alpha = 2.5$
will incorrectly reject the short memory hypothesis in favour of long memory. See Remark 2.
\begin{figure}[h!]
\begin{tabular}{cr}
\includegraphics[width=0.4\textwidth]{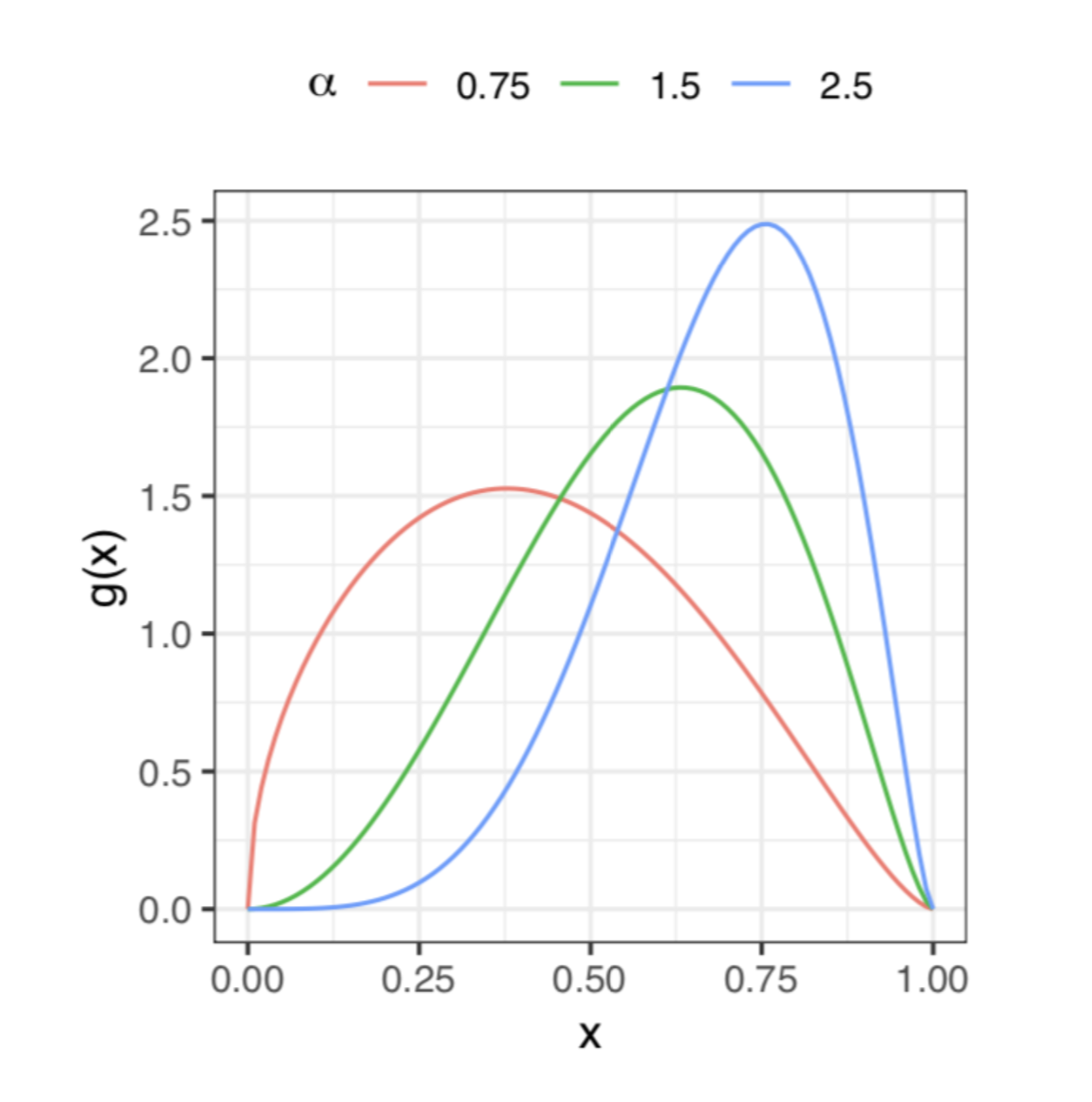}	
&\includegraphics[width=0.4\textwidth]{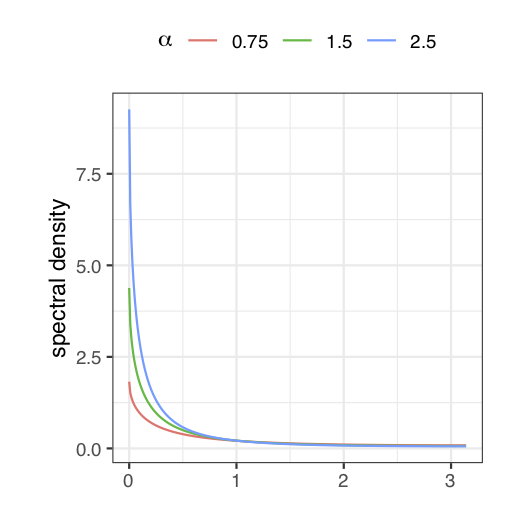}
\end{tabular}
	\caption{[left] Probability density $g(x)$, $x \in (0,1)$, in \eqref{betag} for $\beta = 2.5$.
[right]  Spectral density $f(\lambda)$, $\lambda \in [0,\pi]$, in \eqref{betaf}  
for the same value of $\beta$. 	
}
\label{fig:3}
\end{figure}

Let us turn to the description of our simulation procedure.
We simulate 5000 panels for each configuration of $N$, $T$,
$\alpha$ and $\beta$, where
\begin{itemize}
	\item  $(N,T) = (750, 1000), \, (750,2000)$,
	\item   $\beta = 1.5, \, 1.75, \, 2, \, 2.25,\, 2.5$,
	\item $\alpha = 0.75,\, 1.5,\, 2.5$.
\end{itemize}
As usual in tail-index estimation, the most difficult and delicate task is choosing the threshold.
We note that conditions in Theorem  \ref{thm:1} and Corollary \ref{thm:2} hold asymptotically
and allow for different choice of $\delta$; moreover, they depend on (unknown) $\beta $ and the second-order parameter $\nu$.
Roughly speaking, larger $\delta $ increases the number of the usable observations (upper order statistics) in \eqref{best_tilde} and \eqref{best}, hence
makes standard deviation of the estimator smaller, but at the same time
increases bias since the density $g(x)$ in \eqref{betacond} is more likely to deviate from its  asymptotic form on a longer interval
$(1-\delta, 1)$.  In the i.i.d.\ case or $\beta_N$, the `optimal'  choice of $\delta $ is given by
\begin{equation}\label{deltastar}
\delta^* :=  \Big( \frac{\beta (\beta + \nu)^2}{2 \tau^2 \nu^3 \kappa N} \Big)^{1/(\beta+2\nu)},
\end{equation}
see equation (4.3.8) in \cite{gol1987}, which minimizes the asymptotic
mean squared error of $\beta_N$ provided
the distribution of $a_i$ satisfies a generally stronger version of the second-order condition in \eqref{cond:G}:
\begin{equation}\label{G2}
\P(a_i > 1-x) = \kappa x^\beta (1+ \tau x^\nu + o (x^\nu)),  \quad x \to 0+,
\end{equation}
for the same $\beta > 1$ and some parameters $\nu > 0$, $\kappa > 0$, $\tau \neq 0$.
Then on average the computation of $\beta_N$ uses
\begin{equation}\label{kstar}
\E \sum_{i=1}^N \1 (a_i > 1- \delta^* ) \sim \Big( \frac{(1-\rho) N^{-\rho}}{B \sqrt{-2\rho}} \Big)^{2/(1-2\rho)} =: k^*
\end{equation}
upper order statistics 
of $a_1,\dots,a_N$, where the second-order parameters $\rho := - \nu/\beta < 0$, $B := (\nu/\beta) \kappa^{-\nu/\beta} \blu{\tau} \neq 0$ are more convenient to estimate, see e.g.\ \cite{pau2017}. Therefore, given the order statistics $a_{(1)} \le \dots \le a_{(N)}$, we use (random) $\delta = 1- a_{(N - \lfloor k^*\rfloor)}$ 
as a substitute for $\delta^*$. Furthermore, since $\delta^*$ of \eqref{deltastar} yields asymptotic normality of $\beta_N$ in \eqref{best} with non-zero mean,
we choose a smaller sample fraction $(k^*)^\epsilon < k^*$ with $\epsilon \in (0,1)$ and the corresponding
\begin{equation} \label{delta1}
\delta = 1-a_{(N-\lfloor(k^*)^\epsilon\rfloor)},
 \end{equation}
for which the asymptotic normality of $\beta_N$ holds as in Theorem \ref{thm:1}. In our simulations of $\beta_N$ in \eqref{best} we use $\delta$ in \eqref{delta1} with several values of $\epsilon \in (0,1)$ and $k^*$  is obtained by replacing $\rho$, $B$ in \eqref{kstar} by their semiparametric estimates, see \cite{fra2003}, \cite{gom2002}. We calculate the latter estimates from $a_1, \dots, a_N$ using the algorithm in \cite{gom2009}.
Because of the lack of the explicit formula minimizing the mean squared error of
$\widetilde \beta_N$, in our simulations of the latter estimator  we  use a similar threshold $\delta \in (0,1)$, viz.,
\begin{equation}\label{delta}
\delta = 1 - \widehat a_{(N- \lfloor(\widehat k^*)^\epsilon\rfloor)},
\end{equation}
where $\widehat{a}_{(1)} \le \dots \le \widehat{a}_{(N)}$ denote the order statistics calculated from the simulated
RCAR(1) panel,
and $\widehat k^*$ is the analogue of $k^*$ computed from $\widehat a_1, \dots, \widehat a_N$.
Moreover, for our simulations of $\widetilde \beta_N$ we use $r = 10$ in \eqref{tildea}, though any large $r$ could be chosen.

Table \ref{table:mse2} illustrates the effect of $\epsilon$ in \eqref{delta1}, \eqref{delta} on the performance of $\beta_N$, $\widetilde \beta_N$, respectively, for $(N,T) = (750, 1000)$.
Choosing smaller $\epsilon$, the bias of
the estimators decreases in most cases, whereas their standard deviation increases.
The choice $\epsilon = 0.9$ seems to be near `optimal' in the sense of RMSE.

{\renewcommand{\arraystretch}{0.4}
	
	\begin{table}[ht]
		\centering
		\begin{tabular}{lrrr rrrrr rrrr}
			\hline\\
			\multicolumn{2}{l}{}& \multicolumn{3}{c}{$\beta=1.5$}& & \multicolumn{3}{c}{$\beta=2$}& & \multicolumn{3}{c}{$\beta=2.5$}\\ \\
			\cline {3-5} \cline{7-9} \cline{11-13}\\
			$\epsilon$ &$\alpha=$& 0.75& 1.5& 2.5& &  0.75& 1.5& 2.5& &  0.75& 1.5& 2.5\\ \\
			\hline\\
			
			&&\multicolumn{11}{c}{RMSE of $\widetilde \beta_N$} \\ \\
			1&& 0.18 & 0.12 & 0.11 && 0.22 & 0.24 & 0.24 && 0.36 & 0.40 & 0.42 \\ \\
			0.9&& 0.18 & 0.15 & 0.18 && 0.19 &  0.21 & 0.20 && 0.29 & 0.33 & 0.34 \\ \\
			0.8&& 0.17 & 0.23 & 0.30 && 0.21 &  0.24 & 0.25 && 0.29 & 0.33 & 0.33 \\ \\
			0.7&& 0.25 & 0.35 & 0.47 && 0.29 &  0.33 & 0.36 && 0.36 & 0.40 & 0.41 \\ \\
			
			&&\multicolumn{11}{c}{RMSE of $\beta_N$} \\ \\
			1&& 0.15 &  0.18 &  0.19 &&  0.26 &  0.29 &  0.31 &&  0.38 &  0.43 &  0.47 \\ \\
			0.9&& 0.13 &  0.16 &  0.16 &&  0.21 &  0.25 &  0.26 &&  0.31 &  0.36 &  0.39 \\ \\
			0.8&& 0.16 &  0.18 &  0.19 &&  0.22 &  0.25 &  0.27 &&  0.31 &  0.35 &  0.37 \\ \\
			0.7&& 0.21 &  0.24 &  0.24 &&  0.28 &  0.31 &  0.33 &&  0.36 &  0.41 &  0.42 \\ \\ \\
			
			&&\multicolumn{11}{c}{Bias of $\widetilde \beta_N$} \\ \\
			1&& -0.07 & -0.05 & -0.01 && -0.19 & -0.20 & -0.20 && -0.32 & -0.36 & -0.38\\ \\
			0.9&& -0.01 &  0.04 &  0.10 && -0.11 & -0.10 & -0.08 && -0.21 & -0.25 & -0.25 \\ \\
			0.8&& 0.05 &  0.13 &  0.22 && -0.04 & -0.01 &  0.02 && -0.13 & -0.15 & -0.13 \\ \\
			0.7&& 0.11 &  0.23 &  0.36 &&  0.02 &  0.07 &  0.12 && -0.05 & -0.06 & -0.03 \\ \\
			
			&&\multicolumn{11}{c}{Bias of $\beta_N$} \\ \\
			1&& -0.12 & -0.14 & -0.15 && -0.23 & -0.26 & -0.28 && -0.35 & -0.40 & -0.44 \\ \\
			0.9&& -0.06 & -0.08 & -0.09 && -0.15 & -0.17 & -0.19 && -0.24 & -0.29 & -0.32 \\ \\
			0.8&& -0.03 & -0.04 & -0.04 && -0.09 & -0.11 & -0.12 && -0.17 & -0.21 & -0.22 \\ \\
			0.7&& 0.00 &  0.00 &  0.00 && -0.05 & -0.05 & -0.06 && -0.10 & -0.13 & -0.14 \\ \\
		\end{tabular}
		\caption{Performance of $\widetilde \beta_N$, $\beta_N$ for $(N,T)=(750,1000)$, $a_i^2 \sim \operatorname{Beta}(\alpha, \beta)$, using $\delta$ in \eqref{delta}, \eqref{delta1}, respectively, with estimated parameters $B, \rho$ and $r=10$. The number of replications is $5000$.}
		\label{table:mse2}
\end{table}}

Table \ref{table:mse1} presents the performance of $\widetilde \beta_N$ and $\beta_N$ with $\epsilon = 0.9$,
for a wider choice of parameters $\alpha, \beta $ and two values of $T$.
We see that the sample RMSE 
of both statistics $\widetilde \beta_N$ and $\beta_N$ are very similar almost uniformly in $\alpha, \beta, T$
(the only exception seems the case $\alpha = 2.5, \beta= 1.25, T = 1000$).
Surprisingly, in most cases the statistic $\widetilde \beta_N$ for $T=1000$ seems to be more accurate than the same statistic for $T = 2000$ and the `i.i.d.' statistic $\beta_N$. This unexpected effect can be explained by
a positive bias introduced by estimated $a_i$ for $\epsilon = 0.9$  which partly compensates the negative bias of $\beta_N$, see Table \ref{table:mse1}.

{\renewcommand{\arraystretch}{0.4}
		
	\begin{table}[ht]
		\centering
		\begin{tabular}{lrrr rlrrr lrrr}
			\hline\\
			\multicolumn{2}{l}{}& \multicolumn{3}{c}{$\beta=1.25$}& & \multicolumn{3}{c}{$\beta=1.5$}& & \multicolumn{3}{c}{$\beta=1.75$}\\ \\
			\cline {3-5} \cline{7-9} \cline{11-13}\\
			&$\alpha=$& 0.75& 1.5& 2.5& &  0.75& 1.5& 2.5& &  0.75& 1.5& 2.5\\ \\
			\hline\\
			&&\multicolumn{11}{c}{RMSE} \\ \\
			$\widetilde \beta_N, \, T = 1000$ && 0.11 &  0.17 &  0.27 &&  0.18 &  0.15 &  0.18 &&  0.15 &  0.16 &  0.17 \\ \\
			$\widetilde \beta_N, \, T = 2000$ && 0.10 &  0.12 &  0.15 &&  0.12 &  0.14 &  0.14 &&  0.16 &  0.17 &  0.17 \\ \\ \\
			$\beta_N$  && 0.10 &  0.12 &  0.13 &&  0.13 &  0.16 &  0.16 &&  0.17 &  0.20 &  0.21 \\ \\ \\
			
			&&\multicolumn{11}{c}{Bias} \\ \\
			$\widetilde \beta_N, \, T = 1000$ && 0.04 &  0.12 &  0.23 && -0.01 &  0.04 &  0.10 && -0.05 & -0.03 &  0.00 \\ \\
			$\widetilde \beta_N, \, T = 2000$ && 0.00 &  0.04 &  0.09 && -0.04 & -0.02 &  0.01 && -0.08 & -0.08 & -0.07 \\ \\ \\
			$\beta_N$ && -0.04 & -0.05 & -0.06 && -0.06 & -0.08 & -0.09 && -0.10 & -0.12 & -0.14 \\ \\ \\
			
			\hline\\
			\multicolumn{2}{l}{}& \multicolumn{3}{c}{$\beta=2$}& & \multicolumn{3}{c}{$\beta=2.25$}& & \multicolumn{3}{c}{$\beta=2.5$}\\ \\
			\cline {3-5} \cline{7-9} \cline{11-13}\\
			&$\alpha=$& 0.75& 1.5& 2.5& &  0.75& 1.5& 2.5& &  0.75& 1.5& 2.5\\ \\
			\hline\\
			&&\multicolumn{11}{c}{RMSE} \\ \\
			$\widetilde \beta_N, \, T = 1000$ && 0.19 &  0.21 &  0.20 &&  0.23 &  0.26 &  0.26 &&  0.29 &  0.33 &  0.34 \\ \\
			$\widetilde \beta_N, \, T = 2000$ && 0.20 &  0.22 &  0.23 &&  0.24 &  0.28 &  0.29 &&  0.30 &  0.34 &  0.36 \\ \\ \\
			$\beta_N$ && 0.21 &  0.25 &  0.26 &&  0.26 &  0.30 &  0.32 &&  0.31 &  0.36 &  0.39 \\ \\ \\
			
			&&\multicolumn{11}{c}{Bias} \\ \\
			$\widetilde \beta_N, \, T = 1000$ && -0.11 & -0.10 & -0.08 && -0.16 & -0.17 & -0.17 && -0.21 & -0.25 & -0.25 \\ \\
			$\widetilde \beta_N, \, T = 2000$ && -0.12 & -0.14 & -0.14 && -0.17 & -0.20 & -0.21 && -0.23 & -0.27 & -0.28 \\ \\ \\
			$\beta_N$ && -0.15 & -0.17 & -0.19 && -0.19 & -0.23 & -0.25 && -0.24 & -0.29 & -0.32 \\ \\
		\end{tabular}	
		\caption{Performance of $\widetilde \beta_N$, $\beta_N$
			for $(N,T)=(750,1000)$ and $(N,T)=(750,2000)$, $a_i^2 \sim \operatorname{Beta}(\alpha, \beta)$, using $\delta$ in \eqref{delta}, \eqref{delta1}, respectively, with estimated parameters $B, \rho$ and $\epsilon=0.9$, $r=10$. The number of replications is $5000$.}
		\label{table:mse1}
\end{table}}

Next, we examine the performance of the test statistics $\widetilde{Z}_N$ and $Z_N$ using $\delta$ in \eqref{delta} and \eqref{delta1}, respectively.
Tables \ref{table3}, \ref{table4} reports rejection rates of $H_0 : \beta \ge 2$ in favour of $H_1 : \beta < 2$ at level $\omega = 5\%$ using $\widetilde{Z}_N$ and $Z_N$ for $(N,T) = (750,2000)$ and different values of $\alpha,\beta$ and $\epsilon$, $r$. The results are almost the same when using $\widetilde{Z}_N$ for $r=3$ and $r=10$.
Table \ref{table3} shows that choosing $\epsilon=0.7$ for $\beta \ge  2.25$ and all values of $\alpha$, the incorrect rejection rates of the null  in favour of the long memory alternative are much smaller than $5\%$, despite the spurious long memory in Figure \ref{fig:3} [right].
Choosing $\epsilon=0.9$, they increase a bit but are still smaller than 5\% using $\widetilde Z_N$, see Table~\ref{table4}.
However, at the boundary $\beta = 2$ between short and long memory, the empirical size of the tests is not well observed. The deviation from the nominal level is especially noticeable in the case of the `i.i.d.'  statistic $Z_N$. This size distortion may be explained by the fact that the tails of the empirical distribution of $Z_N$ and $\widetilde Z_N$ are not well-approximated by tails of the limiting normal distribution. More extensive simulations of
the performance of $\widetilde Z_N$ and $Z_N$ for other choices of $\epsilon$, $r$, $N$, $T$ are presented in
the arXiv version \cite{lei2018} of this paper.

{\renewcommand{\arraystretch}{0.4}
	
	\begin{table}[ht]
		\centering
		\begin{tabular}{ lrrrr rrrrr rrrr rrrr rrrr}
			\hline\\
			\multicolumn{2}{l}{} & \multicolumn{3}{c}{$\beta=1.25$}& & \multicolumn{3}{c}{$\beta=1.5$}& & \multicolumn{3}{c}{$\beta=1.75$}\\ \\
			\cline {3-5} \cline{7-9} \cline{11-13}\\
			& $\alpha = $ &0.75& 1.5& 2.5& &  0.75& 1.5& 2.5&&  0.75& 1.5& 2.5\\ \\
			\hline\\
			$\widetilde Z_N$, $r=3$ && 93.5 & 76.4 & 56.2 && 67.1 & 50.7 & 37.6 && 29.1 & 23.2 & 18.6\\ \\
			$\widetilde Z_N$, $r=10$ && 93.5 & 76.4 & 56.2 && 67.1 & 50.7 & 37.6 && 29.2 & 23.2 & 18.6\\ \\
			$Z_N$ && 97.0 & 94.0 & 93.5 && 76.8 & 69.6 & 68.7 && 36.7 & 35.7 & 35.8\\ \\ \\			
			\hline\\
			\multicolumn{2}{l}{} & \multicolumn{3}{c}{$\beta=2$}& & \multicolumn{3}{c}{$\beta=2.25$}& & \multicolumn{3}{c}{$\beta=2.5$}\\ \\
			\cline {3-5} \cline{7-9} \cline{11-13}\\
			& $\alpha = $ &0.75& 1.5& 2.5& &  0.75& 1.5& 2.5&&  0.75& 1.5& 2.5\\ \\
			\hline\\
			$\widetilde Z_N$, $r=3$ && 7.8 &  7.9 &  6.1 &&  0.8 &  1.5 &  1.8 &&  0.1 &  0.2 &  0.4 \\ \\
			$\widetilde Z_N$, $r=10$ && 8.0 &  7.9 &  6.1 &&  0.9 &  1.5 &  1.8 &&  0.1 &  0.2 &  0.4\\ \\			
			$Z_N$ && 10.8 & 11.6 & 12.3 &&  1.7 &  2.6 &  3.0 &&  0.2 &  0.2 &  0.6\\ \\
		\end{tabular}
		\caption{
			Rejection rates (in \%) of $H_0 : \beta\geq 2$ at level $\omega=5\%$ with $\widetilde Z_N$, $Z_N$ for $(N, T) = (750,2000)$, $a_i^2 \sim {\operatorname{Beta}}(\alpha,\beta)$, using $\delta$ in \eqref{delta}, \eqref{delta1}, respectively, with estimated parameters $B$, $\rho$ and $\epsilon =0.7$.
			The number of replications is $5000$.
		}
		\label{table3}
	\end{table}}

	{\renewcommand{\arraystretch}{0.4}
				\begin{table}[ht]
			\centering
			\begin{tabular}{ lrrrr rrrrr rrrr rrrr rrrr}
				\hline\\
				\multicolumn{2}{l}{} & \multicolumn{3}{c}{$\beta=1.25$}& & \multicolumn{3}{c}{$\beta=1.5$}& & \multicolumn{3}{c}{$\beta=1.75$}\\ \\
				\cline {3-5} \cline{7-9} \cline{11-13}\\
				& $\alpha = $ &0.75& 1.5& 2.5& &  0.75& 1.5& 2.5&&  0.75& 1.5& 2.5\\ \\
				\hline\\
				$\widetilde Z_N$, $r=3$ && 100.0 &  99.9 &  99.3 &&  98.5 &  95.3 &  91.9 &&  72.9 &  68.3 &  62.9\\ \\
				$\widetilde Z_N$, $r=10$ && 100.0 &  99.9 &  99.3 &&  98.7 &  95.3 &  91.9 &&  76.1 &  68.4 &  62.9\\ \\
				$Z_N$ && 100.0 &  99.9 &  99.9 &&  99.2 &  97.9 &  97.6 &&  81.0 &  78.1 &  78.1\\ \\ \\		
				\hline\\
				\multicolumn{2}{l}{} & \multicolumn{3}{c}{$\beta=2$}& & \multicolumn{3}{c}{$\beta=2.25$}& & \multicolumn{3}{c}{$\beta=2.5$}\\ \\
				\cline {3-5} \cline{7-9} \cline{11-13}\\
				& $\alpha = $ &0.75& 1.5& 2.5& &  0.75& 1.5& 2.5&&  0.75& 1.5& 2.5\\ \\
				\hline\\
				$\widetilde Z_N$, $r=3$ && 19.8 & 25.1 & 23.8 &&  1.1 &  4.2 &  4.6 &&  0.0 &  0.3 &  0.6 \\ \\
				$\widetilde Z_N$, $r=10$ && 26.7 & 25.5 & 23.8 &&  2.2 &  4.4 &  4.6 &&  0.1 &  0.3 &  0.6\\ \\			
				$Z_N$ && 32.2 & 34.1 & 37.0 &&  3.6 &  5.8 &  8.0 &&  0.1 &  0.5 &  1.2\\ \\
			\end{tabular}
			\caption{
				Rejection rates (in \%) of $H_0 : \beta\geq 2$ at level $\omega=5\%$ with $\widetilde Z_N$, $Z_N$ for $(N, T) = (750,2000)$, $a_i^2 \sim {\operatorname{Beta}}(\alpha,\beta)$, using $\delta$ in \eqref{delta}, \eqref{delta1}, respectively, with estimated parameters $B$, $\rho$ and $\epsilon =0.9$.
				The number of replications is $5000$.
			}
			\label{table4}
		\end{table}	}


\medskip

\begin{remark} In time series theory, several semi-parametric tests for long memory were developed,
see \cite{gir2003}, \cite{grom2018}, \cite{lob1998}. Clearly, these tests cannot be
applied to individual RCAR(1) series, the latter being always short memory a.s., independently of the value of $\beta $ and the distribution
of the AR coefficient $a_i$. However, in practice one can apply the above-mentioned tests to the aggregated RCAR(1)
series $\{\bar X_N(1), \dots, \bar X_N (T)\} $
in \eqref{barXN} whose autocovariance decays as $t^{-(\beta-1)}$, $t \to \infty$,  see \eqref{covX}. In \cite{lei2018} we report a Monte Carlo analysis of the finite sample performance of the V/S test (see \cite{gir2003})
applied to the aggregated RCAR(1) series with short memory $(\beta = 2.5)$ for the same model as above. Since the V/S statistic is quite
sensitive to the choice of the tuning parameter, \cite{lei2018} derived its data-driven choice by
expanding the HAC estimator as proved by \cite{abad09a} and minimizing its mean squared error under the null hypothesis. The simulations in \cite{lei2018} show that the V/S test is not valid,
in the sense that its empirical size is not close to the nominal level.  The reason why the V/S test fails for our panel model may be due to the presence  of the spurious long memory  (see Figure \ref{fig:3}).
\end{remark}

\clearpage

\section{Proofs}\label{sec5}

{\it Notation.}
In what follows, let
$G_N(x) := N^{-1} \sum_{i=1}^N \1 (a_i \le x)$ and $\widehat G_N (x) := N^{-1} \sum_{i=1}^N \1 (\widehat a_i \le x)$, where $\widehat{a}_1, \dots, \widehat{a}_N$ are defined by \eqref{hata} and $a_1, \dots, a_N$ are i.i.d.\ with $G (x) := \P(a_1 \le x)$, $x \in \R$.
\medskip

\begin{proof}[Proof of Theorem~\ref{thm:1}]
	We rewrite the estimator in \eqref{best} as
	$$
	\beta_N =\frac{1-G_N (1-\delta) }{\int_{1-\delta}^1 \ln (\delta/(1- x)) \d G_N (x)} = \frac{1-G_N (1-\delta) }{\int_{1-\delta}^1 (1-G_N (x)) \frac{\d x}{1- x}} = \frac{1-G_N (1-\delta)}{\int_0^\delta (1-G_N (1- x)) \frac{\d x}{x}}.
	$$
	Next, we decompose $\beta_N - \beta = D^{-1} \sum_{i=1}^{4} I_i$,
	where
	\begin{eqnarray}\label{def:Ii}
	I_1 &:=& \beta \int_0^\delta (G_N(1-x) - G(1-x)) \frac{\d x}{x},\qquad
	I_2 := - (G_N(1-\delta) - G(1-\delta)),\\
	I_3 &:=& - \beta \int_0^\delta (1-\kappa x^\beta - G(1-x)) \frac{\d x}{x},\qquad
	I_4 := 1 - \kappa \delta^\beta - G(1-\delta)\nn
	\end{eqnarray}
	and
	\begin{eqnarray}\label{def:D}
	D := \int_0^\delta (1 - G_N(1-x)) \frac{\d x}{x} = \frac{1}{\beta} ( \kappa \delta^\beta-I_1-I_3 ).
	\end{eqnarray}
	According to the assumptions $(N \delta^{\beta})^{1/2} \delta^{\nu} \to 0$ and (G), we get
	$(N \delta^{-\beta})^{1/2} I_4 \to 0$ and $(N \delta^{-\beta})^{1/2} I_3 \to 0$.
	
	From the tail empirical process theory, see e.g.\ Theorem~1 in \cite{ein1990}, (1.1)--(1.3) in \cite{mas1988}, we have that
	\begin{eqnarray}\label{cltG}
	(N \delta^{-\beta})^{1/2} ( G_N (1- x\delta) - G(1- x\delta) ) \to_{D[0,1]} \kappa^{1/2} B(x^{\beta}),
	\end{eqnarray}
	where $\{ B(x), \ x \in [0,1] \}$ is a standard Brownian motion.
	Therefore, we can expect that
	\begin{eqnarray}\label{main}
	(N \delta^{-\beta})^{1/2} (I_1 + I_2) \to_d
	\kappa^{1/2} \Big( \beta \int_0^1  B(x^{\beta}) \frac{\d x}{x}
	-  B(1) \Big).
	\end{eqnarray}
	The main technical point to prove
	\eqref{main} is to justify the application of the invariance principle \eqref{cltG}
	to the integral $(N \delta^{-\beta})^{1/2} I_1$,
	which is not a continuous functional in the uniform topology on the whole space $D[0,1]$. For $\varepsilon >0$, we split $I_1 := \beta (I_0^\varepsilon + I_\varepsilon^1)$, where
	$$
	I_0^\varepsilon := \int_0^\varepsilon (G_N(1 - \delta x)- G(1-\delta x)) \frac{\d x}{x}, \quad I_\varepsilon^1  := \int_\varepsilon^1  (G_N(1 - \delta x)- G(1-\delta x))\frac{\d x}{x}.
	$$
	By \eqref{cltG},
	$  (N \delta^{-\beta})^{1/2} I_\varepsilon^1  \to_d  \kappa^{1/2} \int_\varepsilon^1  B(x^{\beta}) \frac{\d x}{x}$, where $\E |\int_\varepsilon^1  B(x^{\beta}) \frac{\d x}{x} - \int_0^1  B(x^{\beta}) \frac{\d x}{x} |^2
	\to 0$ as $\varepsilon \to  0$. Hence,  \eqref{main} follows from
	\begin{eqnarray}\label{Gep}
	\lim_{\varepsilon \to 0} \limsup_{N\to  \infty}  \E | (N \delta^{-\beta})^{1/2} I_0^\varepsilon |^2 = 0.
	\end{eqnarray}
	In the i.i.d.\ case $\E | I_0^\varepsilon |^2
	=
	\int_0^\varepsilon \int_0^\varepsilon \operatorname{Cov}(G_N(1 - \delta x), G_N (1-\delta y)) \frac{\d x \d y}{xy}$, where
	\begin{eqnarray*}
		\operatorname{Cov}(G_N(x), G_N(y))
		=N^{-1} G(x \wedge y) (1 - G(x \vee y))\le N^{-1} (1 - G(x \vee y)),
	\end{eqnarray*}
	and
	\begin{eqnarray}\label{varI10}
	\E |I_0^\varepsilon|^2 \le \frac{C}{N} \int_0^\varepsilon \frac{\d x}{x} \int_0^x (1- G(1 - \delta y)) \frac{\d y}{y} \le \frac{C}{N} \int_0^\varepsilon \frac{\d x}{x} \int_0^x (\delta y)^\beta \frac{\d y}{y} = \frac{C}{N \delta^{-\beta}} \int_0^\varepsilon x^{\beta -1} \d x = \frac{C \varepsilon^\beta}{N \delta^{-\beta}},
	\end{eqnarray}
	proving \eqref{Gep} and hence  \eqref{main} too.
	
	Finally, we obtain $\delta^{-\beta} D \to_p \kappa/\beta$ in view of $(N\delta^{-\beta})^{1/2} (I_1 + I_3) = O_p(1)$ and $N\delta^\beta \to \infty.$
	
	We conclude that
	\begin{eqnarray}\label{betalim}
	(N \delta^{\beta})^{1/2} (\beta_N - \beta) \to_d \frac{\beta}{\kappa^{1/2}}
	\Big(\beta  \int_0^1  B(x^{\beta}) \frac{\d x}{x}
	-   B(1)\Big) =: W.
	\end{eqnarray}
	Clearly, $W$ follows a normal distribution with zero mean and variance
	\begin{eqnarray*}
		\E W^2 = \frac{\beta^2}{\kappa} \Big(2 \beta^2 \int_0^1 \frac{\d x}{x} \int_0^x y^{\beta -1} \d y
		- 2 \beta \int_0^1 x^{\beta-1} \d x + 1 \Big) = \frac{\beta^2}{\kappa},
	\end{eqnarray*}
	which agrees with the one in \cite{gol1987}. The proof is complete.
\end{proof}

In the proof of Theorem \ref{thm:2N} we will use the following proposition.

\begin{proposition}\label{prop:2}
	Assume (G). 
	As $N \to \infty$, let $\delta = \delta_N \to 0$ so that $N \delta^\beta \to \infty$ and \eqref{condrhoN}, \eqref{condN} hold.
	Then
	\begin{align}\label{rmd2}
	(N \delta^{-\beta})^{1/2} &(\widehat G_N (1-\delta) - G_N(1-\delta)) = o_{p}(1),\\
	(N \delta^{-\beta})^{1/2} \int_{\delta^r}^\delta &( \widehat G_N(1-x) - G_N (1-x)) \frac{\d x}{x} = o_{p} (1).\label{rmd1}
	\end{align}
\end{proposition}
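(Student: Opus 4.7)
The strategy for both claims is a deterministic sandwich bound relating $\widehat a_i$ to $a_i$, followed by Markov's inequality with an optimally chosen smoothing parameter $\varepsilon = \varepsilon_N$.

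The key pointwise inequality I would establish first is: for every $u \in \R$ and every $\varepsilon > 0$,
\begin{equation*}
|\1(\widehat a_i \le u) - \1(a_i \le u)| \;\le\; \1(u-\varepsilon < a_i \le u+\varepsilon) \;+\; \1(|\widehat \rho_i| > \varepsilon),
\end{equation*}
since on $\{|\widehat\rho_i| \le \varepsilon\}$ the difference of indicators vanishes unless $a_i$ lies within $\varepsilon$ of $u$. Summing over $i=1,\dots,N$ and dividing by $N$ gives a bound on $|\widehat G_N(u)-G_N(u)|$ by the sum of an empirical measure of a thin strip and a fraction of ``large noise'' outliers.

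For \eqref{rmd2}, I would take $u=1-\delta$ and pass to expectations, using (G) to bound
$\P(1-\delta-\varepsilon < a_1 \le 1-\delta+\varepsilon) \le C\big((\delta+\varepsilon)^\beta - (\delta-\varepsilon)^\beta \vee 0\big) \le C\varepsilon \delta^{\beta-1}$ (for $\varepsilon \le \delta/2$), and \eqref{condrhoN} to bound $\max_i \P(|\widehat \rho_i|>\varepsilon) \le \chi/\varepsilon^p + \chi'$. After multiplying by $\sqrt{N\delta^{-\beta}}$, I choose $\varepsilon$ balancing the two error terms; the choice $\varepsilon \asymp (\chi/\delta^{\beta-1})^{1/(p+1)}$ yields a bound of order $\sqrt{N\delta^{-\beta}}(\chi/\delta^{p+\beta})^{1/(p+1)}\delta^{\beta-1}\cdot \text{const} + \sqrt{N\delta^{-\beta}}\chi'$, which is $\sqrt{N\delta^\beta}\max\{\chi'/\delta^\beta,(\chi/\delta^{p+\beta})^{1/(p+1)}\}\cdot\text{const}$, hence $o(1)$ by \eqref{condN} (without even needing the $\ln\delta$ factor).

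For \eqref{rmd1}, I would apply the same sandwich uniformly in $u = 1-x$ for $x \in [\delta^r,\delta]$ and integrate against $\d x/x$, producing
\begin{equation*}
\int_{\delta^r}^\delta |\widehat G_N(1-x) - G_N(1-x)|\frac{\d x}{x} \le N^{-1}\!\sum_{i=1}^N \int_{\delta^r}^\delta \!\1(1-x-\varepsilon < a_i \le 1-x+\varepsilon)\frac{\d x}{x} + (r-1)|\ln\delta|\,N^{-1}\!\sum_{i=1}^N\1(|\widehat\rho_i|>\varepsilon).
\end{equation*}
Taking expectation, the first integral becomes $\int_{\delta^r}^\delta[G(1-x+\varepsilon)-G(1-x-\varepsilon)](\d x/x)$, which via (G) I would split at $x = \varepsilon$: on $x>\varepsilon$ use the mean-value bound $C\varepsilon x^{\beta-1}$, on $x\le\varepsilon$ use the crude bound $C\varepsilon^\beta$. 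This gives a total $\le C\varepsilon\delta^{\beta-1} + C\varepsilon^\beta|\ln\delta|$, and the second term contributes $C|\ln\delta|(\chi/\varepsilon^p+\chi')$. Multiplying by $\sqrt{N\delta^{-\beta}}$ and again choosing $\varepsilon \asymp (\chi/\delta^{\beta-1})^{1/(p+1)}$, the whole expression is controlled (modulo the logarithmic factor $\ln\delta$) by the quantity appearing in \eqref{condN} and hence tends to $0$.

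The main obstacle I anticipate is the second statement: unlike the pointwise bound \eqref{rmd2}, the integration over $x\in[\delta^r,\delta]$ produces a factor $|\ln\delta|$, both from the $\d x/x$ integration of the ``bad noise'' term and from the integral of $\varepsilon^\beta/x$ on the range $x\le \varepsilon$. This is precisely the reason that condition \eqref{condN} is phrased with the $\ln\delta$ factor; I must choose $\varepsilon$ carefully so that the $\varepsilon\delta^{\beta-1}$ piece (which is not log-multiplied) remains of the same order as the logarithmic pieces and the resulting bound still matches \eqref{condN} with room to spare. Verifying that the same choice $\varepsilon$ works simultaneously for both \eqref{rmd2} and \eqref{rmd1}, and checking the split at $x=\varepsilon$ carefully for $\beta>2$ versus $\beta<2$, is the only technically delicate point.
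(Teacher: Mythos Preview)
Your proposal is correct and follows essentially the same route as the paper: the sandwich bound
\[
|\1(\widehat a_i\le u)-\1(a_i\le u)|\le \1(u-\varepsilon<a_i\le u+\varepsilon)+\1(|\widehat\rho_i|>\varepsilon),
\]
then Markov's inequality, with the smoothing width chosen to balance the strip-mass term against the noise tail. Your $\varepsilon\asymp(\chi/\delta^{\beta-1})^{1/(p+1)}$ is exactly the paper's choice, written there as $\gamma\delta$ with $\gamma=(\chi/\delta^{p+\beta})^{1/(p+1)}$.

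The one place where the paper is slightly cleaner is the integral \eqref{rmd1}: rather than split at $x=\varepsilon$ and worry about the cases $\beta\lessgtr 2$, the paper observes that for every $x\in[0,\delta]$ the density on $(1-x-\gamma\delta,1-x]$ is bounded by $C\delta^{\beta-1}$ (since $\beta>1$), so the strip mass is uniformly $\le C\gamma\delta^\beta$; integrating this constant against $\d x/x$ over $[\delta^r,\delta]$ produces $C\gamma\delta^\beta(r-1)|\ln\delta|$ directly. This sidesteps the ``only technically delicate point'' you anticipated, since $\varepsilon=\gamma\delta=o(\delta)$ under \eqref{condN} and no refined estimate on $[ \delta^r,\varepsilon]$ is needed.
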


\begin{proof}
	For $x \in [1-\delta, 1]$, write
	\begin{eqnarray*}
		\widehat G_N (x) - G_N(x) = \frac{1}{N} \sum_{i=1}^N
		(\1 (a_i + \widehat \rho_i \le x) - \1(a_i \le x)) = D'_N(x) - D''_N (x),
	\end{eqnarray*}
	where $\widehat \rho_{i}:= \widehat a_i - a_i$, $i=1,\ldots,N$, and
	\begin{eqnarray*}
		D'_N(x) &:=& \frac{1}{N} \sum_{i=1}^N \1(x < a_i \le x - \widehat \rho_{i},\ \widehat \rho_i \le 0), \\
		D''_N (x) &:=& \frac{1}{N} \sum_{i=1}^N \1( x - \widehat \rho_i < a_i \le x, \ \widehat \rho_{i} > 0).
	\end{eqnarray*}
	For all $\gamma > 0$,
	$$
	0 \le D'_N(x) \le \frac{1}{N} \sum_{i=1}^N \1(x < a_i \le x + \gamma \delta) + \frac{1}{N}  \sum_{i=1}^N \1 ( |\widehat \rho_i|>\gamma \delta) =:
	I'_{N}(x) + I''_{N},
	$$
	where by 
\eqref{condrhoN}
\begin{equation}\label{IN}
	\E I''_{N}  \le \max_{1\le i \le N} \P ( | \widehat \rho_{i} | > \gamma \delta ) \le
 \frac{\chi}{(\gamma \delta)^p} + \chi'
	\end{equation}
and
	\begin{equation}\label{ineq:I1}
	\E I'_N (x) = \P ( x < a_1 \le x + \gamma \delta ) \le C \int_{x}^{x+\gamma \delta} (1-u)^{\beta-1} \d u \le 
	C \gamma \delta^{\beta}
	\end{equation}
	holds uniformly for all $x \in [1-\delta,1]$ according  to \eqref{cond:G}.
Choose
\begin{equation}
\gamma := \big(\frac{\chi}{\delta^{p+\beta}}\big)^{1/(p+1)},
\end{equation}
then $\chi/(\gamma \delta)^p \sim \gamma \delta^\beta $ and the r.h.s.\ of \eqref{IN} does not exceed
$C (\gamma \delta^\beta + \chi') \le C \max\{\gamma \delta^\beta, \chi'\}$.  	
Under the conditions \eqref{condrhoN}, \eqref{condN}, 
from \eqref{IN}, \eqref{ineq:I1} it follows that
	$$
	(N\delta^{-\beta})^{1/2} \int_{\delta^r}^{\delta} \E D'_N (1-x) \frac{\d x}{x} \le C |\ln \delta| (N\delta^{-\beta})^{1/2} \big( \E I''_N + \sup_{x \in [0,\delta]} \E I'_N (1-x) \big) = o (1),
	$$
	hence
	$$
	(N\delta^{-\beta})^{1/2} \int_{\delta^r}^{\delta} D'_N (1-x) \frac{\d x}{x} = o_p (1)
	$$
	by Markov's inequality. Since
	$$
	(N\delta^{-\beta})^{1/2} \int_{\delta^r}^{\delta} D''_N (1-x) \frac{\d x}{x} = o_p (1)
	$$
	is analogous, this proves
	\eqref{rmd1}. The same proof works for the relation \eqref{rmd2}.
\end{proof}

\begin{proof}[Proof of Theorem~\ref{thm:2N}]
	Rewrite
	\begin{equation*} 
	\widetilde \beta_N = \frac{1- \widehat G_{N} (1-\delta)}{\int_{\delta^r}^{\delta} (1- \widehat G_{N} (1- x)) \frac{\d x}{x}}.
	\end{equation*}
	Split $\widetilde \beta_N - \beta =  \widetilde D^{-1} (\sum_{i=1}^{4} I_i + \sum_{i=1}^{4} R_i)$, where $I_i$, $i=1, \dots, 4$, are defined in \eqref{def:Ii} and
	\begin{eqnarray*}
		R_1 &:=& \beta \int_{\delta^r}^\delta ( \widehat G_N(1-x) - G_N (1-x)) \frac{\d x}{x},\qquad
		R_2 := G_N(1-\delta) - \widehat G_N (1-\delta),\\
		R_3 &:=& \beta \int_0^{\delta^r} (G(1-x) - G_N (1-x)) \frac{\d x}{x}, \qquad R_4 := \beta \int_0^{\delta^r} (1- G(1-x)) \frac{\d x}{x}
	\end{eqnarray*}
	and
	\begin{eqnarray*}
		\widetilde D := \int_{\delta^r}^\delta (1 - \widehat G_N(1-x)) \frac{\d x}{x} = D - \frac{1}{\beta} (R_1+R_3+R_4)
	\end{eqnarray*}
	with $D$ given by \eqref{def:D}.
	By Proposition \ref{prop:2}, $(N \delta^{-\beta})^{1/2} R_2 = o_p(1)$ and
	$(N \delta^{-\beta})^{1/2} R_1 = o_p(1)$.
	In view of \eqref{varI10}, we have $\E | (N \delta^{-\beta})^{1/2} R_3|^2 \le C \delta^{r \beta} = o (1)$ and so
	$(N \delta^{-\beta})^{1/2} R_3 = o_p (1)$.
	Finally, $(N\delta^{-\beta})^{1/2} R_4 = o(1)$ as $N \delta^{(2r-1) \beta} \to 0$.
\end{proof}

\begin{proof} [Proof of Corollary~\ref{thm:2}]
Let
$\chi := T^{-\min\{p-1,p/2\}}$, $\chi' := \chi^{1/\min\{p-1,p/2\}} = T^{-1}$.
Then \eqref{cond1}--\eqref{cond2} agree with \eqref{condN} and the result follows from Theorem \ref{thm:2N}.
\end{proof}

\begin{proof}[Proof of Corollary~\ref{cor:1}]
	Let $K_N = \sum_{i=1}^{N} \1 (a_i > 1 - \delta)$. Since $\operatorname{Var} (K_N) \le N (1-G(1-\delta))$ and $N (1-G(1-\delta)) \to \infty$, Markov's inequality yields
	$$
	\frac{K_N}{N (1 - G(1-\delta))} \to_p 1,
	$$
	consequently, $(N\delta^\beta)^{-1} K_N \to_p \kappa$.
	By Proposition~\ref{prop:2}, we have $(N \delta^{\beta})^{-1} (\widetilde K_N - K_N) = o_p (1)$. We conclude that $(N \delta^\beta)^{-1} \widetilde K_N \to_p \kappa$.
\end{proof}

\section*{Acknowledgments}

The authors are grateful to two anonymous referees for criticisms and helpful suggestions.
We thank Marijus Vai\v ciulis for helping us with the choice of the threshold in the simulation experiment.
Vytaut{\.e} Pilipauskait{\.e} acknowledges the financial support from the project ``Ambit fields: probabilistic properties and statistical inference'' funded by Villum Fonden.

\section*{Data availability statement}

Data sharing is not applicable to this article as no new data were created or analysed in this study.

\end{document}